\DeclareFontFamily{OMX}{MnSymbolE}{}
\DeclareSymbolFont{MnLargeSymbols}{OMX}{MnSymbolE}{m}{n}
\DeclareFontShape{OMX}{MnSymbolE}{m}{n}{
    <-6>  MnSymbolE5
   <6-7>  MnSymbolE6
   <7-8>  MnSymbolE7
   <8-9>  MnSymbolE8
   <9-10> MnSymbolE9
  <10-12> MnSymbolE10
  <12->   MnSymbolE12
}{}
\let\llangle\@undefined
\let\rrangle\@undefined
\DeclareMathDelimiter{\llangle}{\mathopen}%
                     {MnLargeSymbols}{'164}{MnLargeSymbols}{'164}
\DeclareMathDelimiter{\rrangle}{\mathclose}%
                     {MnLargeSymbols}{'171}{MnLargeSymbols}{'171}
\definecolor{bwmagenta}{rgb}{0.0,0.45,0.6}
\definecolor{bwblue}{rgb}{0.4,0.1,0.2}
\def\@splitop#1#2\@nil{$\mathscr{#1}\!\!$\calligra#2\,\,}
\newcommand*\DeclareCursiveOperator[2]{%
  \newcommand#1{\mathop{\mbox{\@splitop#2\@nil}}\nolimits}}
\DeclareCursiveOperator{\TAY}{Hess}
\DeclareCursiveOperator{\HOM}{Hom}
\DeclareCursiveOperator{\Det}{D{}et}
\newtheorem{theo}{Theorem}[section]
\newtheorem{cor}[theo]{Corollary}
\newtheorem{lemma}[theo]{Lemma}
\newtheorem{remark}[theo]{Remark}
\newtheorem{proposition}[theo]{Proposition}
\newtheorem{theorem}{Theorem}[section]
\newcommand{\quo}[1]{ \mathbf{Z}/p^{n}\mathbf{Z}  }
\newcommand{\defo}{\mathbb{T}}
\newcommand{\Sel}{Sel(\defo)}
\DeclareSymbolFont{cyrletters}{OT2}{wncyr}{m}{n}
\DeclareMathSymbol{\sha}{\mathalpha}{cyrletters}{"58}
\newcommand{\Z}{\mathbf{Z}}
\newcommand{\Q}{\mathbf{Q}}
\newcommand{\C}{\mathbf{C}}
\newcommand\chapterauthor[1]{\authortoc{#1}\printchapterauthor{#1}}
\newcommand\chapterauthor*[1]{\printchapterauthor{#1}}
\newcommand{\printchapterauthor}[1]{%
  {\parindent0pt\vspace*{-25pt}%
  \linespread{1.1}\large\scshape#1%
  \par\nobreak\vspace*{35pt}}
  \@afterheading%
}
\newcommand{\authortoc}[1]{%
  \addtocontents{toc}{\vskip-10pt}%
  \addtocontents{toc}{%
    \protect\contentsline{chapter}%
    {\hskip1.3em\mdseries\scshape\protect\scriptsize#1}{}{}}
  \addtocontents{toc}{\vskip5pt}%
}
\newcommand{\invlim}{\mathop{\varprojlim}\limits}
\DeclareMathOperator{\fin}{f}
\newcommand{\mint}[3]{\times \!\!\!\!\!\!\!\int^{#1}\!\!\!\!\int_{#2}^{#3} \!\! \omega_f}
\DeclareMathOperator{\rec}{rec}
\newcommand{\fpp}{{\mathfrak p}}
\DeclareMathOperator{\sign}{\mathrm{sign}}
\DeclareMathOperator{\wt}{wt}
\newcommand{\kk}{{k_{_{\!\circ}}}}
\newcommand{\smallmat}[4]{\bigl(\begin{smallmatrix}#1&#2\\#3&#4\end{smallmatrix}\bigr)}
\DeclareMathOperator{\cyc}{cyc}
\newcommand{\hW}{{\mathbb{W}}}
\newcommand{\hkappa}{\boldsymbol{\kappa}}
\newcommand{\cH}{\mathcal H}
\newcommand{\cE}{\mathcal E}
\newcommand{\cW}{\mathcal W}
\newcommand{\PP}{\mathbb{P}}
\newcommand{\Gal}{\mathrm{Gal\,}}
\newcommand{\GL}{\mathrm{GL}}
\newcommand{\Fr}{\mathrm{Fr}}
\newcommand{\ordi}{{\mathrm{ord}}}
\newfont{\gotip}{eufb10 at 12pt}
\newcommand{\cO}{{\mathcal O}}
\newcommand{\cP}{{\mathcal P}}
\newcommand{\cL}{{\mathcal L}}
\newcommand{\ra}{\rightarrow}
\newcommand{\lra}{\longrightarrow}
\newcommand{\SL}{{\mathrm {SL}}}
\newcommand{\Pic}{{\mathrm{Pic}}}
\newcommand{\testg}{\breve{g}} 
\newcommand{\testh}{\breve{h}}
\newcommand{\htf}{\breve{\mathbf{f}}}
\newcommand{\htg}{\breve{\mathbf{g}}}
\newcommand{\hth}{\breve{\mathbf{h}}}
\newcommand{\Lp}{{\mathscr{L}_p}}
\newcommand{\hf}{{\mathbf{f}}}
\newcommand{\hg}{{\mathbf{g}}}
\newcommand{\hh}{{\mathbf h}}
\newcommand{\faa}{{\mathfrak{a}}}
\newcommand{\fbb}{{\mathfrak{b}}}
\newcommand{\fc}{{\mathfrak{c}}}
\numberwithin{equation}{section}
\begin{document}

\title[Stark-Heegner points and diagonal classes]{Stark-Heegner points and diagonal classes}
\author{Henri Darmon and Victor Rotger}

\begin{abstract}
Stark-Heegner points 
are conjectural 
substitutes for Heegner points when the imaginary quadratic field of the theory of complex multiplication is replaced 
by a real quadratic field $K$. 
They are constructed  analytically as local  points  on elliptic curves with
 multiplicative reduction at a prime $p$ that remains inert in $K$,  but are conjectured 
 to be rational over ring class fields of $K$ and to satisfy a Shimura reciprocity
  law describing the action of $G_K$ on them. 
The main conjectures of \cite{darmon-hpxh} predict that any linear combination of Stark-Heegner
points weighted by the values of a ring class character $\psi$ of $K$  should belong to
 the corresponding piece of the Mordell-Weil group over the associated ring class field, 
 and should be non-trivial when $L'(E/K,\psi,1) \ne 0$. 
 Building on the
  results on families of diagonal classes described in the remaining contributions to this volume,   
   this note explains how
such linear combinations  
  arise from  global classes in the idoneous pro-$p$ 
 Selmer group, and are non-trivial when the first derivative of 
 a weight-variable $p$-adic $L$-function $\Lp(\hf/K,\psi)$  does not vanish at the  point associated to $(E/K,\psi)$.
 
 \medskip\medskip

\noindent
{\bf\em R\'esum\'e}. ---
 Les points de Stark-Heegner
g\'en\'eralisent les points de Heegner quand  un corps  quadratique r\'eel
 $K$  remplace
 le corps quadratique imaginaire de la th\'eorie de la multiplication complexe.
 La d\'emarche  $p$-adique  qui les sous-tend
 fait que   ces points  sur une courbe elliptique $E$ 
 sont \`a priori locaux, 
 d\'efinis sur   sur une 
 extension finie de $\Q_p$.
 On conjecture qu'ils   sont de nature globale, qu'ils appartiennent aux  groupes  de Mordell-Weil de
$E$ sur certains corps  de classe  de $K$, 
et qu'ils satisfont
 une loi de r\'eciprocit\'e de Shimura
 d\'ecrivant l'action de  $ G_K := {\rm Gal}(\bar K/K)$.
Les  conjectures de \cite{darmon-hpxh} pr\'edisent ainsi 
qu'une combinaison  lin\'eaire de points de Stark-Heegner
 pond\'er\'ee par les valeurs d'un caract\`ere  $\psi $ de $ G_K $  appartient  au sous-espace propre 
 correspondant du groupe Mordell-Weil de $E$
 sur le corps de classe  d\'ecoup\'e par $\psi$, 
 et qu'elle est  non triviale si et seulement si 
 $ L '(E/K, \psi, 1) \ne 0$.
 On d\'emontre que cette combinaison linéaire
  provient tout au moins 
  d'une classe globale dans la $\psi$-partie du pro-$p$ 
 groupe de Selmer de $E$, et qu'elle est non-triviale  lorsque la  d\'eriv\'ee premi\`ere d'une 
certaine fonction $L$ $p$-adique associ\'ee \`a $E$
 ne s'annule pas en $\psi$.

\end{abstract}

\address{H. D.: Montreal, Canada}
\email{darmon@math.mcgill.ca}
\address{V. R.: IMTech, UPC and Centre de Recerca Matem\`{a}tiques, C. Jordi Girona 1-3, 08034 Barcelona, Spain}
\email{victor.rotger@upc.edu}

\subjclass{11G18, 14G35}
\keywords{Stark-Heegner points, diagonal cycles, triple product $L$-functions, generalised Kato classes \\
Points de Stark-Heegner, cycles diagonaux, produit triple de Garrett-Rankin, classes de Kato g\'en\'eralis\'ees.}

\maketitle

\newpage
\begin{center}
{\em To Bernadette Perrin-Riou on her 65-th birthday}
\end{center}

\tableofcontents



\subjclass{11G18, 14G35}





\section{Introduction}

Let $E $ be an elliptic curve over $\Q$ of conductor $N$ and
 let $K$ be a   quadratic field of discriminant $D$ relatively prime to $N$, with    associated   
Dirichlet character $\chi_K$. 

When $\chi_K(-N)=-1$,
  the Birch and Swinnerton-Dyer conjecture    predicts
   a systematic
   supply of rational points on $E$ defined over  abelian extensions of $K$.
   More precisely, if $H$ is  any ring class field of $K$  attached to an order
   $\cO$ of $K$ 
   of conductor  prime to $D N$,  
   the Hasse-Weil $L$-function $L(E/H,s)$ 
    factors as a product
    \begin{equation}
    \label{eqn:product-L}
     L(E/H,s) = \prod_{\psi} L(E/K,\psi,s)  
     \end{equation}
     of twisted   $L$-series $L(E/K,\psi,s)$   indexed
   by the   finite order characters
$$\psi: G=\Gal(H/K)\lra L^\times,$$ 
 taking values in some fixed finite extension  $L$ of $\Q$. 
 The  $L$-series in the right-hand side of \eqref{eqn:product-L} all vanish to odd order at $s=1$, 
because they arise  from  
     self-dual Galois representations and  have
      sign $\chi_K(-N)$ in their functional 
   equations.  In particular, $L(E/K,\psi,1)=0$ for all $\psi$.
An equivariant refinement of the Birch and Swinnerton-Dyer conjecture
   predicts that the $\psi$-eigenspace $E(H)^\psi\subset E(H)\otimes L$ of the
   Mordell-Weil group for the 
   action of $\Gal(H/K)$ has dimension $\ge 1$, and hence,
    that $E(H)\otimes \Q$ contains a copy of the regular representation
    of $G$.    
    
    \medskip
   When $K$ is imaginary quadratic, this prediction  is
   largely
   accounted for by the theory of Heegner points on modular or Shimura curves, which 
   for each $\psi$ as above produces an explicit element $P_\psi \in E(H)^\psi$.
   The Gross-Zagier formula implies that $P_\psi$ is non-zero when $L'(E/K,\psi,1)\ne 0$. Thus it follows for instance that $E(H)\otimes\Q$ 
  contains a copy of the regular representation of $G$
    when 
   $L(E/H,s)$ vanishes to order $[H:K]$ at the center.

\medskip
When $K$ is real quadratic, 
the construction of non-trivial algebraic points in $E(H)$ appears to lie 
 beyond the scope of 
available techniques.  
Extending the theory of Heegner points  to this setting 
thus represents   a tantalizing challenge at
the  frontier of  our current understanding of the Birch and Swinnerton-Dyer conjecture.

\medskip
Assume from now on that $D>0$
and there is an  odd prime $p$ satisfying
\begin{equation}
\label{eqn:lrv}
 N=pM \mbox{ with }  p\nmid M,  \qquad \chi_K(p)=-1,  \qquad\chi_K(M)=1.
 \end{equation}

A conjectural construction of  Heegner-type  points, under the further 
restriction that $\chi_K(\ell) = 1$ for all 
$\ell|M$, 
was proposed in \cite{darmon-hpxh}, 
and extended to the more general  setting of 
\eqref{eqn:lrv}
 in 
  \cite{Gre}, \cite{DG}, \cite{LRV}, \cite{KPM} and \cite{Res}. It leads to a canonical collection of so-called {\em Stark-Heegner points}
$$  P_\faa \in  E(H\otimes \Q_p) = \prod_{\wp\mid p} E(H_\wp),$$ 
indexed by the  ideal classes $\faa$ of $\Pic(\cO)$, which are  regarded here as 
{\em semi-local points}, i.e.,  $[H:K]$-tuples $P_\faa = \{ P_{\faa,\wp} \}_{\wp \mid p}$ of local points in $E(K_p)$.  
This construction, and its  equivalence with the slightly different  approach of the original one, is briefly recalled in  \S \ref{sec:SH}.

As a formal consequence of the definitions (cf.\,Lemma \ref{lemma:shimura-reciprocity-formal}), the semi-local points $ P_\faa$ satisfy the Shimura reciprocity law
 $$ P_\faa^\sigma = P_{{\rm rec}(\sigma) \cdot \faa} \quad \mbox{\, for all \,} \sigma \in G,$$
where $G$ acts on the group $E(H\otimes \Q_p)$ in the natural way and  ${\rm rec}: G \ra \Pic(\cO)$ is the Artin map of global class field theory. 
 
The construction of the semi-local point $P_\faa \in \prod_{\wp\mid p} E(H_\wp)$ is purely $p$-adic analytic, relying on   a theory
  of $p$-adic integration of $2$-forms on  the product 
  $\mathcal H \times \mathcal H_p$, where $\mathcal H$ denotes Poincar\'e's complex upper half plane and $\mathcal H_p$ stands for Drinfeld's rigid analytic $p$-adic avatar of $\mathcal H$,
  the integration being performed,  metaphorically speaking, on two-dimensional regions in $\cH_p\times \cH$ 
  bounded 
  by   Shintani-type
    cycles associated to ideal classes in $K$.
      The following   statement of the 
      Stark-Heegner conjectures of loc.cit.\,is
       equivalent to \cite[Conj.\,5.6, 5.9 and 5.15]{darmon-hpxh}, and the main conjectures in  \cite{Gre}, \cite{DG}, \cite{LRV}, \cite{KPM} and \cite{Res} in the general setting of \eqref{eqn:lrv}: 
      
 \vspace{0.1in} 
 \noindent
 {\bf Stark-Heegner Conjecture}. 
 {\em 
 The semi-local points $P_\faa$ belong to the natural image of $E(H)$ in $E(H\otimes\Q_p)$, and   the $\psi$-component
 $$ P_\psi := \sum_{\faa\in\Pic(\cO)} \!\!\!\psi^{-1}(\faa) P_\faa \ \in  \ E(H\otimes\Q_p)^\psi$$
 is non-trivial if and only if $L'(E/K,\psi,1)\ne 0$.
}

\vspace{0.1in}
 
  The   Stark-Heegner Conjecture  
   has been   proved  in many cases where  $\psi$  is a {\em quadratic} ring class character.
When $\psi^2=1$,   the induced representation
 $$ V_\psi := {\rm Ind}_K^\Q \psi = \chi_1 \oplus\chi_2$$
 decomposes as the sum of two one-dimensional
 Galois representations attached to quadratic 
 Dirichlet characters satisfying
 $$ \chi_1(p) = -\chi_2(p), \qquad \chi_1(M) = \chi_2(M),$$
 and the pair $(\chi_1,\chi_2)$ can be uniquely ordered in such a way that  the $L$-series $L(E,\chi_1,s)$ and
 $L(E,\chi_2,s)$ have sign $1$ and $-1$ respectively in their functional equations.

Define the local sign $\alpha := a_p(E) $, which is equal to either $1$ or $-1$
 according to whether $E$ has split or non-split multiplicative reduction at $p$.   Let $\fpp$ 
 be a prime of $H$ above $p$,
 and let $\sigma_\fpp\in \Gal(H/\Q)$ denote the associated Frobenius element.  Because $p$ is inert in $K/\Q$, the unique prime of $K$ above 
 $p$ splits completely in $H/K$ and $\sigma_\fpp$ belongs to a
 conjugacy class of  reflections in the
 generalised dihedral group $\Gal(H/\Q)$.  It depends in an essential
 way on the choice of $\fpp$, but, 
   because
 $\psi$ cuts out an abelian extension of $\Q$, 
the Stark-Heegner point 
\begin{equation}
\label{eqn:PPsialpha}
P_\psi^\alpha := P_\psi + \alpha \cdot \sigma_\fpp  P_{\psi}
\end{equation}
does not depend on this choice. It can in fact be shown that
$$ P_\psi^\alpha = \left\{
\begin{array}{cl} 
2 P_\psi & \mbox{ if } \chi_2(p) = \alpha; \\
0 & \mbox{ if } \chi_2(p) = -\alpha.
\end{array} \right.
$$
The recent work \cite{Mok2} of Mok and  \cite{LMY} of Longo, Martin and Yan, building on the methods introduced in   \cite[Thm.~1]{BD-Ann},   \cite{Mok},   and 
\cite{longo-vigni}, 
asserts:

\vspace{0.1in}
\noindent
{\bf Stark-Heegner theorem for quadratic characters}. 
{\em Let  $\psi$  be a {\em quadratic} ring class character of conductor prime to $2D N$. Then the 
Stark-Heegner point 
$P_\psi^\alpha$ belongs to $E(H)\otimes\Q $ and is non-trivial if and only if
\begin{equation}
\label{BDgenus-thm}
L(E,\chi_1,1) \ne 0, \quad  L'(E,\chi_2,1) \ne 0, \quad \mbox{ and } \quad \chi_2(p) = \alpha.
\end{equation}   }

The principle behind the proof  of this result is to  compare $P_\psi^\alpha$ to suitable
Heegner points arising from Shimura curve parametrisations, exploiting the fortuitous circumstance 
that the field over which $P_\psi$ is conjecturally defined 
is a biquadratic extension of $\Q$ and  is thus  also   contained in  
ring class fields of imaginary quadratic fields (in many different ways).

\medskip
  The present work is concerned with the  less well understood
{\em generic}  case where
   $\psi^2 \ne 1$, when the
 induced  representation $V_\psi$ is   irreducible. Note that  $\psi$ is either totally even or totally odd,
 i.e., complex conjugation acts as a scalar $\epsilon_\psi\in \{ 1,-1\}$ on the induced representation $V_\psi$. 
   
The field which $\psi$ cuts out  cannot be embedded in any compositum of ring class fields of
imaginary quadratic fields, and the Stark-Heegner
Conjecture  
   therefore  seems
 impervious to the 
   theory of Heegner points  in this case.    
   
     
The semi-local point $P_\psi^\alpha$ 
  of \eqref{eqn:PPsialpha} now
  depends crucially on the choice of $\fpp$, 
  but it is not hard to check that its image
under the localisation homomorphism
    $$j_\fpp: E(H  \otimes \Q_p)
  \lra  E(H_\fpp) = E(K_p)  $$
  at $\fpp$ 
   is independent of this choice, up to scaling by $L^\times$ (cf.\,Lemma \ref{change-of-p}).
It is the
 local point 
  $$ P_{\psi,\fpp}^\alpha := j_\fpp(P_\psi^\alpha) \in E(H_\fpp)\otimes L = E(K_p)\otimes L$$
 which will play a key role in 
 Theorems A and B below.

  Theorems A and B are conditional
 on   
 either one  of 
 the    two    
 non-vanishing hypotheses below, which apply to
  a pair $(E,K)$ and  a choice
 of archimedean sign $\epsilon \in \{-1,1\}$.
 The first hypothesis is the counterpart, in analytic rank one, of the  non-vanishing for  simultaneous twists of  modular $L$-series 
arising as the special case of \cite[Def.~6.8]{DR2} discussed in (168) of loc.cit., where it  plays
   a similar role in the proof of the Birch and Swinnerton--Dyer conjecture for $L(E/K,\psi,s)$ when 
$L(E/K,\psi,1)\ne 0$. The main difference is that  we are now 
concerned with quadratic ring class characters for which $L(E/K,\psi,s)$ vanishes to odd rather than  to even order
at the center.

\vspace{0.3cm}
\noindent
{\bf  Analytic non-vanishing hypothesis:}  {\em Given $(E,K)$
as above, and a choice of a sign  $\epsilon \in \{1,-1\}$, 
there exists  a  quadratic
Dirichlet character $\chi$   of conductor prime to $D N$ satisfying 
$$
\chi(-1) = -\epsilon, \quad \chi \chi_K(p) = \alpha, \quad L(E,\chi,1) \ne 0, \quad  L'(E,\chi \chi_K,1) \ne 0.
$$
}

The second non-vanishing hypothesis applies to an arbitrary
 ring class character $\xi$ of $K$. 

\vspace{0.3cm}
\noindent
{\bf  Weak non-vanishing hypothesis for Stark-Heegner points:}  
{\em Given $(E,K)$ as above,  and a sign $\epsilon \in \{1,-1\}$, 
there  exists  a  ring class 
 character $\xi$   of  $K$ of conductor prime to $D N$   with
  $\epsilon_\xi = -\epsilon$    for which
  $P_{\xi,\fpp}^\alpha\ne 0$. 
}

\vspace{0.3cm}
That the  former hypothesis implies the latter  follows
 by applying the 
 Stark-Heegner theorem for  quadratic characters to the quadratic ring class character $\xi$  of $K$ attached to the pair
 $(\chi_1,\chi_2) := (\chi,\chi\chi_K)$  supplied by the analytic non-vanishing hypothesis.  
 The stronger non-vanishing hypothesis
 is singled out because it has
 the virtue of
 tying in with mainstream questions in analytic number theory on which there has been recent progress
 \cite{munshi}.
 On the other hand, the 
weak non-vanishing hypothesis is known to be   true in the classical setting of   Heegner points, when $K$ 
is imaginary quadratic. In fact, for a
given $E$ and $K$, {\em all but finitely many} of the Heegner points $P_\faa$ (as $\faa$ ranges over all ideal classes of all possible 
orders in $K$) are
of infinite order, and $P_\xi$ and $P_\xi^\alpha$
are  therefore non-trivial for infinitely many  ring class characters $\xi$, and for at least one character of any given
conductor, with finitely many exceptions.
It seems reasonable to expect that Stark-Heegner points should exhibit a similar behaviour, and the experimental evidence bears this out as one can readily verify on a software package like Pari or Magma.
In practice,  efficient algorithms for calculating Stark-Heegner points make it
easy to produce a  non-zero $P_{\xi,\fpp}^\alpha$ for any given $(E,K)$, and indeed, the extensive experiments carried out so far
  have failed  to produce even  a single example of a vanishing 
$P_\xi^\alpha$ when $\xi$ has order $\ge 3$.
Thus, while these non-vanishing hypotheses are probably difficult to prove in general,   they are expected to hold systematically. Moreover, they can easily be checked in practice for any specific 
triple $(E,K,\epsilon)$ and
therefore play  a somewhat ancillary role in studying the infinite collection of Stark-Heegner points
attached to a fixed $E$ and $K$.


Let $V_p(E) := \left(\invlim E[p^n]\right)\otimes\Q_p$ denote the Galois representation attached to $E$ and let
$$\mathrm{Sel}_p(E/H) := H_{\fin}^1(H,V_p(E))$$ 
be the pro-$p$  Selmer group of $E$ over $H$. 
The $\psi$-component of this 
 Selmer group is an $L_p$-vector space, where $L_p$ is a field containing both $\Q_p$ and $L$,
by setting 
$$ \mathrm{Sel}_p(E/H)^{\psi} := \{ \kappa \in H^1_{\fin}(H,V_p(E))\otimes_{\Q_p} L_p 
\mbox{ s.t. } 
\, \sigma \kappa = \psi(\sigma) \cdot \kappa \mbox{ for all }%
\sigma \in \Gal(H/K) \}.$$
Since $E$ is defined over $\Q$, the group $$\mathrm{Sel}_p(E/H) \simeq \oplus_\varrho H_{\fin}^1(\Q,V_p(E)\otimes \varrho)$$ admits a natural decomposition indexed by the set of irreducible representations $\varrho$  of $\Gal(H/\Q)$.
In this note we focus on the isotypic component singled out by $\psi$, namely
\begin{equation}\label{Selpsi}
\mathrm{Sel}_p(E,\psi) := 
H_{\fin}^1(\Q,V_p(E)\otimes V_\psi) 
 = \mathrm{Sel}_p(E/H)^\psi \oplus \mathrm{Sel}_p(E/H)^{\bar\psi}
\end{equation}
where Shapiro's lemma combined with the inflation-restriction sequence 
 gives the above canonical identifications.

It will be convenient to assume from now on  that $E[p]$ is irreducible as a $G_{\Q}$-module.
This hypothesis could be relaxed at the cost of  some simplicity and transparency in some
of the arguments.

\vspace{0.2cm}
\noindent
{\bf Theorem A.}
{\em   
Assume that the (analytic or weak)  non-vanishing hypothesis holds for $(E,K,\epsilon)$. Let $\psi$ be any non-quadratic ring class character of $K$ of conductor prime to $D N$,
for which $\epsilon_\psi =  \epsilon$. Then there
    is a global Selmer  class  
  $$\kappa_\psi\in \mathrm{Sel}_p(E,\psi)$$ whose  natural image in the  group $E(H_\fpp)\otimes L_p$
   of local points      agrees with   $P_{\psi,\fpp}^\alpha$. 
 }
 \vspace{0.2cm}
 
 The Selmer class mentioned in the statement above is constructed as a $p$-adic limit of diagonal classes.
 In particular, it follows from Theorem A that
\begin{equation}
\label{eqn:implication-weaker}
P_{\psi,\fpp}^\alpha \ne 0 \quad  \Rightarrow \quad \dim_{L_p} \Sel_p(E/H)^\psi \ge 1.
\end{equation}   
As a corollary, we obtain 
  a criterion for the infinitude of  $\mathrm{Sel}_p(E/H)^\psi$ in terms
  of the $p$-adic $L$-function  $\Lp(\hf/K,\psi)$ 
   constructed in \cite[\S 3]{BD-Ann}, interpolating the square roots of the
central critical values $L(f_k/K, \psi, k/2)$, as $f_k$ ranges over the weight $k\geq 2$ classical specializations of the Hida family passing through the weight two eigenform $f$ associated to $E$. The interpolation property implies that $\Lp(\hf/K,\psi)$
 vanishes at $k=2$, and its first derivative  $\Lp'(\hf/K,\psi)(2) $
  is a natural $p$-adic analogue of the derivative at $s=1$ of the classical complex $L$-function $L(f/K,\psi,s)$. The following result can thus be viewed as a  $p$-adic variant  of the Birch and Swinnerton-Dyer  Conjecture in this setting.

\vspace{0.3cm}
\noindent
{\bf Theorem B.}
{\em If $\Lp'(\hf/K,\psi)(2) \ne 0$, then $\dim_{L_p} \Sel_p(E/H)^\psi \ge 1.$
} 

\vspace{0.3cm}

Theorem B is a direct corollary of  \eqref{eqn:implication-weaker}
  in light of  the main result of \cite{BD-Ann}, recalled in Theorem \ref{GZ-SH} below,  which asserts that  $P_{\psi,\fpp}^\alpha$  is non-trivial when  $\Lp'(\hf/K,\psi)(2)\ne 0$.

\vspace{0.3cm}
 \noindent
{\bf Remark 1}. 
Assume the $p$-primary part of (the $\psi$-isotypic component of) the Tate-Shafarevich group of $E/H$ is finite. Then Theorem A shows that $P_{\psi,\fpp}^\alpha$ arises from a {\em global} point in $E(H)\otimes L_p$,  as predicted by the Stark-Heegner conjecture. Moreover, Theorem B implies that $\mbox{dim}_L\,E(H)^\psi \geq 1$ if $\Lp'(\hf/K,\psi)(2) \ne 0$.

\vspace{0.3cm}
 \noindent
{\bf Remark 2}. 
The irreducibility of $V_\psi$ when $\psi$ is non-quadratic   
shows that 
  $P_\psi^\alpha$  is non-trivial if and only if the same is true for
  $P_\psi$. The Stark-Heegner Conjecture combined with the  injectivity of the map from $E(H)\otimes L$ to 
  $E(H_\fpp)\otimes L$  suggests that $P_{\psi,\fpp}^\alpha$
  never  vanishes when $P_\psi \ne 0$, but the scenario 
  where $P_\psi^\alpha$ is  a non-trivial element of the
  kernel of $j_\fpp$ seems hard to rule out unconditionally, without assuming the Stark-Heegner conjecture
  a priori.
  

\vspace{0.3cm}
 \noindent
{\bf Remark 3}. Section \ref{sec:SH} is devoted to review the theory of Stark-Heegner points.
For notational simplicity,  \S \ref{sec:SH} has been written under the stronger Heegner hypothesis
 $$ \chi_K(p) = -1, \qquad \chi_K(\ell) = 1 \mbox{ for all  } \ell|M$$
 of \cite{darmon-hpxh}.
This section  merely  collects together  the basic notations and principal results of \cite{darmon-hpxh}, \cite{BD-Ann}, \cite{Mok2} and  \cite{LMY}. 
Exact references for the analogous results needed to cover the more general setting of \eqref{eqn:lrv} are given along the way. 
The remaining sections \S \ref{3.2}, \ref{sec:GZSH}, \ref{sec:proof-main}, \ref{3.3} and \ref{sec:AB}, which form the main body of the article, adapt without change to
 proving Theorems A and B  under the general assumption \eqref{eqn:lrv}. In particular, 
  while {\em quaternionic} modular forms need to be invoked in the general construction of Stark-Heegner points  of \cite{Gre}, \cite{DG} and \cite{LRV}, the arguments in loc.\,cit.\,only employ {\em classical elliptic modular forms} in order to deal with the general setting. 

\vspace{0.3cm}
 \noindent
{\bf Remark 4}. The proof of Theorems A and B summarized in  this note   invokes several crucial results on families of diagonal classes that are proved in the remaining contributions to this volume.
 In particular the articles \cite{BSV1} and \cite{BSV2} supply  essential ingredients in the extension of the Perrin-Riou style reciprocity laws in settings where the idoneous $p$-adic $L$-function admits
an ``exceptional zero". In a previous version of this article it was wrongly claimed that one of the key inputs, namely formula \eqref{C2} in the text, follows from one of the main results in Venerucci's   paper \cite{Ve}; the authors are  grateful to Bertolini, Seveso and Venerucci for pointing out this error and supplying  a proof of this important formula in their contributions to this volume.

\medskip
\noindent
{\bf History and connection with related work}. The first two articles  in this volume are the culmination of a project which originated in the summer of 2010 during a
two month visit by the first author to Barcelona, where, building on the  approach of \cite{BDP},
 the authors began collaborating on what eventually led to the  $p$-adic Gross-Zagier
 formula of \cite{DR1}   relating  $p$-adic
Abel-Jacobi images of diagonal cycles on a triple 
product of modular curves to the special values of  certain Garrett-Rankin triple product $p$-adic $L$-functions. 
In October of that year, they 
realized that Kato's powerful idea of varying  Galois cohomology
classes in (cyclotomic) $p$-adic families could be adapted to deforming 
the \'etale  Abel Jacobi images  of diagonal cycles, or the \'etale regulators of 
Belinson-Flach elements,  along Hida families.   
The  resulting 
 {\em generalised Kato classes}  obtained by specialising
these families
 to weight one
 seemed to  promise 
  significant  arithmetic applications,  notably for  the Birch and Swinnerton-Dyer conjecture over
ring class fields of real quadratic fields -- a setting that held a special appeal because of its connection with the
still poorly understood theory of Stark-Heegner points. 
This led the authors to formulate a program, whose broad outline was already in place by the end of
2010,  and whose key steps involved
\begin{itemize}
\item  In the setting of ``analytic rank zero",
a proof of the ``weak Birch and Swinnerton Dyer conjecture" for elliptic curves over 
$\Q$ twisted by certain Artin representations $\varrho$ of dimension $\le 4$ arising in the tensor product 
of a pair of odd two-dimensional Artin representations, i.e., the statement that
$$ L(E,\varrho,1) \ne 0 \quad \Rightarrow  (E(H)\otimes \varrho)^{G_\Q} = 0.$$
This was carried out in \cite{DR2}  and \cite{BDR}  by showing that the  generalised Kato classes 
fail to be crystalline precisely when $L(E,\varrho,1) \ne 0$. 
\item In the setting of ``analytic rank one", when $L(E,\varrho,1)=0$ it becomes natural to  compare the relevant
 generalised Kato class to   algebraic points in the 
$\varrho$-isotypic part of $E(H)$, along the lines of conjectures first formulated by Rubin (for CM elliptic curves) and
by Perrin-Riou (in the setting of Kato's work). Several precise conjectures were formulated along those
lines, notably in \cite{DLR}, guided by extensive numerical experiments conducted with Alan Lauder.
 In general, the  independent existence of such global points is tied with deep and yet unproved instances of the Birch and 
Swinnerton-Dyer conjecture, but when $\varrho$ is induced from a ring class character of a real quadratic field  $K$ and $p$ is  a prime of 
{\em multiplicative reduction} for $E$ which is inert in $K$, it becomes natural to compare the resulting generalised Kato class (a global
invariant in the Selmer group, albeit with $p$-adic coefficients) to Stark-Heegner points (which are defined purely $p$-adic analytically,
but are conjecturally motivic, with $\Q$-coefficients). 
\end{itemize}
Starting roughly in 2012,   the  idea 
of exploiting $p$-adic families of diagonal cycles and Beilinson-Flach 
elements was taken up  by several others,  motivated by a broader range of applications. 
While the authors were   fleshing out their strategy for writing the two papers appearing in this volume, they  thus
 benefitted from several  key
advances made  during this time, 
which have 
simplified and facilitated the work that is described herein,  and which it is a pleasure to acknowledge, 
most importantly:
\begin{itemize}
\item The construction of three variable cohomology classes was further developed and perfected, in the setting
of Beilinson-Flach elements by  Lei, Loeffler and Zerbes \cite{LLZ}  and several 
significant improvements were subsequently proposed, notably in the article \cite{KLZ} in which  Kings' $\Lambda$-adic sheaves play
an essential role. These provide what are  often more 
efficient and general approaches to constructing $p$-adic families of cohomology classes.

 

\item The article \cite{BSV1} by Bertolini, Seveso and Venerucci  that appears in this volume
constructs a three-variable $\Lambda$-adic class of diagonal cohomology classes by a different 
method, building on the work of Andreatta-Iovita-Stevens, and makes a more systematic study of such classes in settings
 where there is an exceptional zero, surveying a wider range of scenarios.
Although there is  some overlap between the two works as far as the general strategy is concerned,  
both present a different take on these results.  Indeed, the approach in this note eschews the methods of Andreatta-Iovita-Stevens  in favour of an approach based on the study of a collection of cycles on the cube of the modular curve $X(N)$ of full level structure. These  
cycles are of  interest in their own right, and shed a useful complementary perspective on the construction of the $\Lambda$-adic cohomology classes for the triple product.
Indeed, their study forms the basis for the 
ongoing PhD thesis of David Lilienfeldt
\cite{Li21}, and has let to  interesting open questions (cf.\,e.g.\, those that are explored in \cite{CaHs-Rank2}).

\item  Families of cohomology classes based on compatible  collections of  Heegner points 
are of course a long-standing theme in the subject, and have been taken up anew, for instance  in the more recent   works of Castella-Hsieh \cite{CaHs-Heegner}, Kobayashi \cite{Koba} and Jetchev-Loeffler-Zerbes \cite{JLZ}.

\end{itemize}

    \medskip
 \noindent
{\bf Acknowledgements.} The first author was supported by an NSERC Discovery grant. 
  The second author also acknowledges the financial support by ICREA under the ICREA Academia programme.  This project has received funding from the European Research Council (ERC) under the European
Union's Horizon 2020 research and innovation programme (grant agreement No 682152). It is a pleasure to thank M.L. Hsieh and M. Longo
 for detailed explanations of their respective recent preprints, and 
 M.~Bertolini, M.~Seveso, and R.~Venerucci  for their complementary works \cite{BSV1}, \cite{BSV2}
  appearing in this volume.
  

\section{Stark-Heegner points}
\label{sec:SH}


This section recalls briefly the  construction of Stark-Heegner points originally proposed in \cite{darmon-hpxh} and compares it with the equivalent but slightly different presentation given in the introduction. As explained in Remark 3,\,we provide the details under the running assumptions of loc.\,cit.,\,and we refer to the references quoted in the introduction for the analogous story under the more general hypothesis \eqref{eqn:lrv}.

Let $E/\Q$ be an elliptic curve of conductor $N := pM$  
with $p\nmid M$.
Since $E$ has multiplicative reduction at   $p$, 
the group $E(\Q_{p^2})$ of local points over the quadratic unramified extension
$\Q_{p^2}$  of
$\Q_p$
is equipped with Tate's 
$p$-adic uniformisation
$$ \Phi_{\rm Tate}: \Q_{p^2}^\times/q^\Z \lra E(\Q_{p^2}).$$
Let $f $ be the weight two newform 
attached to $E$ via Wiles' modularity theorem, which satisfies the usual invariance 
properties under Hecke's congruence group $\Gamma_0(N)$, and let
$$\Gamma := \left\{ \left(\begin{array}{cc} a & b \\ c & d \end{array}\right) \in \SL_2(\Z[1/p]), \qquad c \equiv 0 \pmod{M} \right\}$$
denote the associated $p$-arithmetic group, 
which acts by M\"obius transformations both on the complex upper-half plane $\cH$ and on Drinfeld's 
$p$-adic analogue $\cH_p := \PP_1(\C_p) - \PP_1(\Q_p)$. 
The main construction of Sections 1-3 of \cite{darmon-hpxh} attaches to  $f$ 
a non-trivial  {\em indefinite multiplicative integral}
$$ \cH_p \times \PP_1(\Q) \times \PP_1(\Q) \lra \C_p^\times/q^\Z, \qquad (\tau,x,y) \mapsto  \mint{\tau}{x}{y}$$
satisfying
\begin{equation}
\label{eqn:invariance-mint}
\mint{\gamma\tau}{\gamma x}{\gamma y}    = \mint{\tau}{x}{y}, \qquad \mbox{ for all } \gamma\in \Gamma,
\end{equation}
along with the requirement that 
\begin{equation}
\label{eqn:additivity-mint}
 \mint{\tau}{x}{y} = \left(\mint{\tau}{y}{x}\right)^{-1}, \qquad
\mint{\tau}{x}{y}  \times \mint{\tau}{y}{z} = \mint{\tau}{x}{z}.
\end{equation}
This function is obtained, roughly speaking, by applying the Schneider-Teitelbaum $p$-adic Poisson transform 
to a suitable  harmonic cocycle constructed from the modular symbol attached to $f$.  
It is important to note that there are in fact {\em 
 two distinct } such modular symbols, which depend on a choice of a 
sign $w_\infty = \pm 1$ at $\infty$  and are referred to as the plus and the minus modular symbols,
and therefore two distinct multiplicative integral functions, with different transformation properties 
under matrices of determinant $-1$ in $\GL_2(\Z[1/p])$.  More precisely, the  multiplicative integral associated to  $w_\infty$
  satisfies the further invariance property
$$ \mint{-\tau}{-x}{-y} = \left(\mint{\tau}{x}{y}\right)^{w_\infty}.$$
See sections 1-3 of loc.~cit., and \S 3.3. in particular, for further details.

Let 
$K$ be a real quadratic field of discriminant $D>0$, 
whose associated Dirichlet 
character $\chi_K$ satisfies the {\em Heegner hypothesis}
$$\chi_K(p) = -1, \qquad \chi_K(\ell) = 1 \mbox{ for all } \ell |M.$$
It follows that $D$ is a  quadratic 
residue modulo $M$, and we may fix a $\delta\in (\Z/M\Z)^\times$ 
satisfying  $ \delta^2 = D \pmod{M}$. Let 
$K_p\simeq \Q_{p^2}$ denote the completion of $K$ at $p$, 
and let 
$\sqrt{D}$ denote a 
chosen square root of $D$ in $K_p$.

Fix an order  $\cO$  of $K$,  of conductor $c$ 
relatively prime to $D N$.  
 The narrow Picard group 
$G_{\cO} := \Pic(\cO)$ is in bijection with the 
set of $\SL_2(\Z)$-equivalence classes of binary quadratic forms of
discriminant $D c^2$. 
 A binary quadratic form $F = Ax^2 + B xy + Cy^2$ 
  of  this discriminant is said to be a {\em Heegner form}   relative to the pair $(M,\delta)$ 
  if 
  $M$   divides $A$ and $ B\equiv \delta c  \pmod{M}$.
Every class in $G_{\cO}$ admits a representative which is a Heegner form,
  and all such representatives are equivalent under the natural action of the group $\Gamma_0(M)$.
In particular, we can write
$$ G_{\cO} =  \Gamma_0(M) \backslash \left\{ A x^2 + Bxy + Cy^2 \quad \mbox{ with } (A,B)\equiv (0, \delta c) \pmod{M} \right\}.$$
For each class $\faa := Ax^2+Bxy+Cy^2 \in G_{\cO}$ as above, 
let 
$$ \tau_{\faa} := \frac{ -B+  c \sqrt{D}}{2A} \in K_p - \Q_p \subset \cH_p,  \qquad \gamma_{\faa} := \left(\begin{array}{cc} r-Bs & -2Cs \\ 
2As & r+Bs \end{array}\right),$$
where $(r,s)$ is a primitive solution to the Pell equation $x^2-D c^2 y^2 =1$.  The matrix $\gamma_\faa\in \Gamma$ 
 has $\tau_\faa$ as a fixed point for its action on $\cH_p$.
This fact, combined with   properties \eqref{eqn:invariance-mint} and \eqref{eqn:additivity-mint}, 
implies that the period 
$$ J_\faa := \mint{\tau_\faa}{x}{\gamma_\faa x} \ \in K_p^\times/q^\Z$$ 
does not depend on the choice of $x\in \PP_1(\Q)$ that was made to define it. 
Property \eqref{eqn:invariance-mint} also shows that $J_\faa$ depends only on $\faa$ and not on the choice
of Heegner representative that was made in order to define $\tau_\faa$ and $\gamma_\faa$.
 The local point
$$ y(\faa) := \Phi_{\rm Tate}(J_\faa) \in E(K_p)$$
is called the {\em Stark-Heegner point} attached to the class $\faa\in G_\cO$.

Let $H$ denote the  narrow ring class field of $K$ attached to $\cO$, whose Galois group is canonically 
identified with $G_\cO$ via global class field theory.
Because $p$ is inert in $K/\Q$ and $\Gal(H/K)$ is a generalised dihedral group, this prime splits completely in
$H/K$. 
The set $\cP$ of primes of $H$ that lie above $p$ has cardinality $[H:K]$ and 
is endowed with a
simply transitive action of $\Gal(H/K) = G_\cO$, denoted  $(\faa,\fpp) \mapsto \faa\ast\fpp$. 

Set $K_p^{\cP} := \mathrm{Hom}(\cP,E(K_p)) \simeq K_p^{[H:K]}$.
There is a  canonical identification  
\begin{equation}
\label{eqn:semilocal}
 H \otimes \Q_p =  K_p^{\cP}, 
 \end{equation}
sending $x\in H\otimes \Q_p$ to the function $\fpp\mapsto x(\fpp):= x_\fpp$,
where $x_\fpp$ denotes  the natural image of $x$ in $H_{\fpp}=K_p$. 
The group $\Gal(H/K)$ acts compatibly 
 on both sides of \eqref{eqn:semilocal}, acting
 on the latter
via the rule
\begin{equation}
\label{eqn:galois-semilocal}
 \sigma x(\fpp)= x(\sigma^{-1}\ast \fpp).
 \end{equation}

Our fixed embedding of $H$ into $\bar\Q_p$ determines a prime
 $\fpp_0\in \cP$.
Conjecture 5.6 of \cite{darmon-hpxh} asserts that the points $y(\faa)$ are the images in $E(K_p)$ 
of global points  $ P_\faa^{?}\in E(H)$ under this embedding, and Conjecture 5.9 of loc.\,cit.\,asserts that 
 these points  satisfy the Shimura reciprocity law
 $$  P_{\fbb\faa}^{?} = {\rec}(\fbb)^{-1} P_{\faa}^{?}, \qquad \mbox{ for all } \fbb \in \Pic(\cO),$$
 where ${\rec}: \Pic(\cO) \lra \Gal(H/K)$ denotes the reciprocity map of global class field
 theory.
 
It is convenient to reformulate the   conjectures of \cite{darmon-hpxh} 
 as suggested in the introduction,
by parlaying the collection $\{y(\faa)\}$ 
 of local points in $E(K_p)$ 
into  a collection of    semi-local points 
$$P_\faa \in E(H\otimes \Q_p) =  E(K_p)^{\cP}  $$
indexed by $\faa\in G_{\cO}$. 
This is done
  by   letting $P_\faa  $ (viewed as an $E(K_p)$-valued function on 
  the set $\cP$) 
   be the  element of $E(H\otimes \Q_p)$ 
  given by
$$   (P_\faa)(\fbb \ast \fpp_0) := y(\faa\fbb),$$
so that, by definition 
\begin{equation}
\label{eqn:intermediate-action}
 P_{\fbb\faa}(\fpp) = P_{\faa}(\fbb\ast \fpp).
 \end{equation}

This point of view  has the pleasant  consequence  that the
 Shimura reciprocity  law becomes a formal consequence of the definitions: 

 \begin{lemma}
 \label{lemma:shimura-reciprocity-formal}
The  semi-local Stark-Heegner points $P_\faa \in E(H\otimes \Q_p)$ satisfy the Shimura  reciprocity law
$$ \rec(\fbb)^{-1}(P_\faa) = P_{\fbb\faa}.$$ \end{lemma}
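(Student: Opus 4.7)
The statement reduces to a pure bookkeeping exercise once one writes everything in terms of functions on the set $\cP$ of primes of $H$ above $p$, so my plan is to simply unwind the definitions in the right order.

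First I would record the three ingredients that have already been set up in the text: (i) the identification $E(H\otimes\Q_p)=E(K_p)^{\cP}$ of \eqref{eqn:semilocal}; (ii) the action of $\sigma\in\Gal(H/K)$ on an element $x\in E(K_p)^{\cP}$, which by \eqref{eqn:galois-semilocal} is given by $(\sigma x)(\fpp)=x(\sigma^{-1}\ast\fpp)$; (iii) the tautological identity \eqref{eqn:intermediate-action}, namely $P_{\fbb\faa}(\fpp)=P_{\faa}(\fbb\ast\fpp)$ for all $\fbb\in\Pic(\cO)$ and $\fpp\in\cP$. The identity (iii) is itself a one-line consequence of the defining formula $(P_\faa)(\fc\ast\fpp_0):=y(\faa\fc)$: writing an arbitrary $\fpp\in\cP$ as $\fpp=\fc\ast\fpp_0$, both sides evaluate to $y(\faa\fbb\fc)$ using the commutativity of the $\Pic(\cO)$-action on $\cP$ and on itself.

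Given these three inputs, the proof is a direct computation. Taking $\sigma:=\rec(\fbb)^{-1}$ in (ii), and using that under the canonical identification $\rec:\Pic(\cO)\simeq\Gal(H/K)$ the Galois action on $\cP$ is precisely the $\ast$-action, we get
\[
\bigl(\rec(\fbb)^{-1}P_\faa\bigr)(\fpp)\;=\;P_\faa\bigl(\rec(\fbb)\ast\fpp\bigr)\;=\;P_\faa(\fbb\ast\fpp).
\]
Applying (iii) to the right-hand side yields $P_{\fbb\faa}(\fpp)$. Since this holds for every $\fpp\in\cP$, equality in $E(H\otimes\Q_p)$ follows from \eqref{eqn:semilocal}.

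There is no genuine obstacle here: the entire content is packaged in the definition $(P_\faa)(\fc\ast\fpp_0):=y(\faa\fc)$, which was chosen precisely so that the Shimura reciprocity law is a formal consequence of the definitions rather than a substantive statement. The only point that merits care is the sign convention in (ii) — one must remember to use $\sigma^{-1}$ rather than $\sigma$ when translating a Galois element to its action on $\cP$, which is exactly what makes $\rec(\fbb)^{-1}$ on the left turn into $\fbb\,\ast$ on the right. The real arithmetic content of the Stark-Heegner conjectures, of course, lies not in this formal shuffling but in the claim that the semi-local points $P_\faa$ already lie in the image of $E(H)$; the lemma just ensures that this global picture, once established, is automatically Galois-equivariant.
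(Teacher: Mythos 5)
Your proposal is correct and follows essentially the same route as the paper: apply the Galois action formula \eqref{eqn:galois-semilocal} with $\sigma=\rec(\fbb)^{-1}$ and then the tautological identity \eqref{eqn:intermediate-action}, using the identification of $\Gal(H/K)$ with $\Pic(\cO)$ acting on $\cP$. The only difference is that you also spell out why \eqref{eqn:intermediate-action} follows from the defining formula $(P_\faa)(\fc\ast\fpp_0):=y(\faa\fc)$, which the paper treats as immediate from the definition.
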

\begin{proof} 
By  \eqref{eqn:galois-semilocal},
$$ \rec(\fbb)^{-1}(P_\faa)(\fpp) = P_\faa(\rec(\fbb)\ast \fpp) = P_{\faa}(\fbb\ast \fpp), \qquad \mbox{ for all } \fpp\in \cP.$$
But on the other hand, by \eqref{eqn:intermediate-action}
$$ P_{\faa}(\fbb \ast \fpp) =  P_{\fbb\faa}(\fpp).$$ 
The result follows from the  two displayed identities.
\end{proof}
The modular form $f$ is an eigenvector for the Atkin-Lehner involution $W_N$ acting on $X_0(N)$. 
Let $w_N$ denote its associated eigenvalue. Note that this is the negative of the sign in the functional equation for
$L(E,s)$ and hence that $E(\Q)$ is expected to have odd (resp.~ even) rank if $w_N=1$ (resp.~if $w_N=-1$). 
Recall the prime $\fpp_0$ of $H$ attached to the chosen embedding of $H$ into $\bar\Q_p$.
The frobenius element at $\fpp_0$ in $\Gal(H/\Q)$ is a reflection in this dihedral group, and is denoted by
$\sigma_{\fpp_0}$. 
\begin{proposition} 
\label{prop:forbenius-sh}
For all  $\faa \in G_{\cO}$, 
$$ \sigma_{\fpp_0} P_\faa  =  w_N   P_{\faa^{-1}}.$$
\end{proposition}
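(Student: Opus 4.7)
The plan is to unpack the Galois action using the semi-local identification \eqref{eqn:semilocal}, reducing the statement to a purely local identity in $E(K_p)$ which then follows from the transformation properties of the multiplicative integral underlying the Stark-Heegner point construction.

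First I would extend the formula \eqref{eqn:galois-semilocal} beyond $\Gal(H/K)$ to $\sigma_{\fpp_0}$. Since $\Gal(H/\Q)$ is generalised dihedral and $\sigma_{\fpp_0}$ is a reflection fixing $\fpp_0$, conjugation by $\sigma_{\fpp_0}$ inverts every element of $G_\cO$, so that $\sigma_{\fpp_0}(\fbb \ast \fpp_0) = \fbb^{-1}\ast\fpp_0$ for all $\fbb \in G_\cO$; on the other hand, $\sigma_{\fpp_0}$ acts on the local field $H_{\fpp_0} = K_p$ as the non-trivial element $c_0 \in \Gal(K_p/\Q_p)$. Combining both effects and evaluating at $\fbb \ast \fpp_0$, then setting $\fc := \faa\fbb^{-1}$ (so that $\faa^{-1}\fbb = \fc^{-1}$), the proposition is equivalent to the purely local identity
$$c_0(y(\fc)) \;=\; w_N \cdot y(\fc^{-1}) \quad \mbox{in } E(K_p), \qquad \fc \in G_\cO.$$

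Next, since Tate's parameter $q$ lies in $\Q_p^\times$ and is therefore fixed by $c_0$, the uniformisation $\Phi_{\rm Tate}$ commutes with $c_0$; scalar multiplication by $w_N \in \{\pm 1\}$ on $E(K_p)$ corresponds to the automorphism $u \mapsto u^{w_N}$ of $K_p^\times/q^\Z$; so it suffices to prove $c_0(J_\fc) = J_{\fc^{-1}}^{w_N}$ in $K_p^\times/q^\Z$. Because the endpoints $x$ and $\gamma_\fc x$ are $\Q$-rational, the action of $c_0$ affects only the base point of the integral, giving $c_0(J_\fc) = \mint{\bar\tau_\fc}{x}{\gamma_\fc x}$.

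Finally, I would relate this last integral to $J_{\fc^{-1}}^{w_N}$. If $\fc$ is represented by the Heegner form $Ax^2 + Bxy + Cy^2$ with base point $\tau_\fc$ and stabiliser $\gamma_\fc$, then $\fc^{-1}$ is represented by the opposite form $Ax^2 - Bxy + Cy^2$, whose associated base point equals $-\bar\tau_\fc$ and whose associated stabiliser is $\eta\,\gamma_\fc\,\eta^{-1}$, where $\eta := \SmallMatrix{-1 & 0 \\ 0 & 1}$; a $\Gamma_0(M)$-adjustment, whose effect is absorbed by the $\Gamma$-invariance \eqref{eqn:invariance-mint}, restores the Heegner condition on the middle coefficient. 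Combining \eqref{eqn:invariance-mint}, the transformation law $\mint{-\tau}{-x}{-y} = \mint{\tau}{x}{y}^{w_\infty}$, and the fact that the modular symbol used to build $\mint{\cdot}{\cdot}{\cdot}$ is a $W_N$-eigenvector with eigenvalue $w_N$, transports $\mint{\bar\tau_\fc}{x}{\gamma_\fc x}$ into $J_{\fc^{-1}}^{w_N}$. The main obstacle is precisely this sign bookkeeping: a priori the combination of the three signs $w_N$, $w_\infty$, and the sign introduced by the Heegner adjustment produces a product of signs, and one must verify that this product collapses to exactly the single factor $w_N$ appearing in the statement.
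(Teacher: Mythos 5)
Your reduction of the semi-local statement to a purely local identity is sound, and in fact it parallels what the paper implicitly does: unwinding \eqref{eqn:galois-semilocal} (extended to the reflection $\sigma_{\fpp_0}$, which fixes $\fpp_0$, inverts $G_\cO$ by conjugation, and acts on each completion through the non-trivial element $c_0$ of $\Gal(K_p/\Q_p)$) does reduce the proposition to the identity $c_0(y(\fc)) = w_N\, y(\fc^{-1})$ in $E(K_p)$. The gap is that this local identity is the entire non-formal content of the proposition, and your proposal does not establish it: you end by conceding that the ``sign bookkeeping'' remains to be verified, which is precisely the point at issue. The paper does not redo this computation; its proof consists in quoting Proposition 5.10 of \cite{darmon-hpxh}, which asserts $\sigma_{\fpp_0} y(\faa) = w_N\, y(\fc\faa)$ for a fixed class $\fc$, together with the explicit description of $\fc$ in equation (177) of loc.\,cit., which yields $\sigma_{\fpp_0} y(\faa) = w_N\, y(\faa^{-1})$ directly.

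Moreover, two concrete steps in your outline would fail as stated. First, $\Phi_{\rm Tate}$ commutes with $c_0$ only when $E$ has split multiplicative reduction; in the non-split case the descent datum introduces an extra sign $\alpha = a_p(E)$, which is absent from your inventory of signs. Second, the passage from the form $(A,-B,C)$ back to a Heegner form cannot be ``absorbed by the $\Gamma$-invariance \eqref{eqn:invariance-mint}'': for forms with $M \mid A$, the residue of the middle coefficient modulo $M$ is preserved by $\Gamma_0(M)$ (and indeed by all of $\Gamma$), so $(A,-B,C)$ is a Heegner form relative to $(M,-\delta)$ and no element of $\Gamma$ restores the condition $B \equiv \delta c \pmod M$. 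That adjustment is implemented by the Atkin--Lehner involution at $M$, contributing the eigenvalue $w_M$ rather than $w_N$; since $w_N = w_p w_M$ with $w_p = -\alpha$ for the $p$-new form $f$, the collapse of the product of signs $w_M$, $w_\infty$, $w_p$, $\alpha$ to the single factor $w_N$ is exactly the delicate computation carried out in \cite{darmon-hpxh}, and your argument neither performs it nor lists all the signs that enter. To repair the proof, either cite \cite[Prop.\,5.10 and eq.\,(177)]{darmon-hpxh} as the paper does, or carry out the full Atkin--Lehner and Tate-uniformisation sign analysis explicitly.
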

\begin{proof}
Proposition 5.10 of \cite{darmon-hpxh} asserts that 
$$ \sigma_{\fpp_0} y(\faa) = w_N y(\fc\faa)$$
for some $\fc \in G_{\cO}$. 
The definition of $\fc$ which occurs in equation (177) of loc.cit.
directly implies that
$$ \sigma_{\fpp_0} y(1) = w_N y(1), \qquad \sigma_{\fpp_0} y(\faa) = w_N y(\faa^{-1}),$$
and the result follows from this.
\end{proof}
Lemma  \ref{lemma:shimura-reciprocity-formal}
shows that 
 the 
 collection of 
Stark-Heegner points  $P_\faa$ 
is preserved under the action of  $\Gal(H/K)$, 
essentially by fiat.
A corollary of the less formal
Proposition 
\ref{prop:forbenius-sh}
is 
 the following   invariance  of the Stark-Heegner points 
 under the full action of $\Gal(H/\Q)$:
 \begin{cor}
 For all $\sigma\in \Gal(H/\Q)$ and all $\faa\in G_{\cO}$,
 $$ \sigma P_{\faa} = w_N^{\delta_\sigma} P_{\fbb}, \qquad \mbox{ for some } \fbb\in G_\cO,$$
 where 
 $$ \delta_\sigma = \left\{ \begin{array}{cl}
 0 & \mbox{ if } \sigma \in \Gal(H/K); \\
  1 & \mbox{ if } \sigma \notin \Gal(H/K). 
  \end{array}\right.
  $$
   \end{cor}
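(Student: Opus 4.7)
The plan is to reduce the statement to the two cases already handled by Lemma \ref{lemma:shimura-reciprocity-formal} and Proposition \ref{prop:forbenius-sh}, exploiting the dihedral structure of $\Gal(H/\Q)$. Recall that because $\Gal(H/K)$ is abelian and complex conjugation acts on it by inversion, $\Gal(H/\Q)$ is generalised dihedral with $\Gal(H/K)$ as its canonical index-two subgroup and with the Frobenius reflection $\sigma_{\fpp_0}$ representing the non-trivial coset. So every $\sigma \in \Gal(H/\Q)$ lies in exactly one of $\Gal(H/K)$ or $\sigma_{\fpp_0} \cdot \Gal(H/K)$, and I would treat these two cases in turn.

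In the first case, $\sigma \in \Gal(H/K)$ and the Artin map identifies $\sigma$ with $\rec(\fc)^{-1}$ for a unique $\fc \in G_\cO$. Lemma \ref{lemma:shimura-reciprocity-formal} then gives directly $\sigma P_\faa = P_{\fc \faa}$, so the assertion holds with $\fbb = \fc \faa$ and $\delta_\sigma = 0$, matching the convention $w_N^{0} = 1$.

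In the second case, $\sigma \notin \Gal(H/K)$, and I would write $\sigma = \tau \sigma_{\fpp_0}$ with $\tau \in \Gal(H/K)$. Setting $\tau = \rec(\fc)^{-1}$, I would apply Proposition \ref{prop:forbenius-sh} first to peel off the reflection and then Lemma \ref{lemma:shimura-reciprocity-formal} to absorb the remaining rotation:
$$\sigma P_\faa \;=\; \tau\bigl(\sigma_{\fpp_0} P_\faa\bigr) \;=\; \tau\bigl(w_N P_{\faa^{-1}}\bigr) \;=\; w_N \, P_{\fc \faa^{-1}}.$$
This yields the desired formula with $\fbb = \fc \faa^{-1}$ and $\delta_\sigma = 1$.

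I do not anticipate any real obstacle here: the dihedral coset decomposition $\Gal(H/\Q) = \Gal(H/K) \sqcup \sigma_{\fpp_0}\Gal(H/K)$ together with the $\Q$-linearity of the action cleanly splits the verification into two parallel computations, each amounting to tracking how the ideal class $\fbb$ and the sign factor $w_N$ transform. The only potential subtlety is making sure the coset representation $\sigma = \tau \sigma_{\fpp_0}$ is the right way round (rather than $\sigma_{\fpp_0}\tau$), but since we can equally write $\sigma = \sigma_{\fpp_0} \tau'$ with $\tau' = \sigma_{\fpp_0}^{-1}\tau\sigma_{\fpp_0} \in \Gal(H/K)$, the conjugation inside the dihedral group merely relabels the class $\fc$ and does not affect the conclusion.
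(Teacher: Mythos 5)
Your proposal is correct and matches the paper's argument: the paper's proof is exactly the observation that $\Gal(H/\Q)$ is generated by $\Gal(H/K)$ and the reflection $\sigma_{\fpp_0}$, so the claim reduces to Lemma \ref{lemma:shimura-reciprocity-formal} on the subgroup and Proposition \ref{prop:forbenius-sh} on the non-trivial coset, which is precisely the two-case computation you carry out. Your case analysis simply fills in the details the paper leaves implicit.
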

   \begin{proof} This
   follows from the fact that $\Gal(H/\Q)$ is generated by $\Gal(H/K)$ together with the 
   reflection $\sigma_{\fpp_0}$. 
   \end{proof}
 To each $\fpp\in \cP$ we have associated an embedding
 $j_\fpp: H\lra K_p$ and a frobenius element $\sigma_{\fpp}\in \Gal(H/\Q)$.
 If  $\fpp' = \sigma\ast \fpp$ is another prime in $\cP$,
 then we observe that
 \begin{equation}
 \label{eqn:dependence-on-pp}
  j_{\fpp'} = j_{\fpp} \circ \sigma^{-1}, \qquad \sigma_{\fpp'} = \sigma \sigma_{\fpp} \sigma^{-1}, \qquad j_{\fpp'} \circ \sigma_{\fpp'} = j_{\fpp} \circ \sigma_{\fpp} \circ \sigma^{-1}.
  \end{equation}
 Let $\psi:\Gal(H/K) \lra L^\times$ be a ring class character, let 
 $$ e_\psi := \frac{1}{\#G_{\cO} }\sum_{\sigma\in G_{\cO}} \psi(\sigma) \sigma^{-1} \in L[G_\cO]$$
 be the associated idempotent in the group ring, and denote by 
 $$ P_\psi := e_\psi P_1 \in  E(H\otimes\Q_p) \otimes L$$
  the  $\psi$-component of the Stark-Heegner point.
 Recall from the introduction the sign $\alpha\in \{-1,1\}$ which is equal to $1$ (resp.~$-1$) if
 $E$ has split (resp.~non-split) multiplicative reduction at the prime $p$.
 Following the notations of the introduction, write
 $$ P_\psi^\alpha =  (1+ \alpha \sigma_{\fpp}) P_\psi.$$
  
\begin{lemma}\label{change-of-p}
 The local point $j_{\fpp}(P_\psi^\alpha)$ is independent of the choice of prime $\fpp\in \cP$ that was made to define it,
 up to multiplication by a scalar in $\psi(G_{\cO}) \subset L^\times$. 
 \end{lemma}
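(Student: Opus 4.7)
The plan is to show directly that replacing $\fpp$ by another prime $\fpp' \in \cP$ multiplies $j_{\fpp}(P_\psi^\alpha)$ by the scalar $\psi(\sigma)^{-1}$, where $\sigma \in G_\cO$ is the unique element with $\fpp' = \sigma \ast \fpp$ (this existence and uniqueness follows from the simple transitivity of the action of $\Gal(H/K) = G_\cO$ on $\cP$ recorded just before \eqref{eqn:semilocal}).

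First I would expand $j_{\fpp'}(P_\psi^\alpha) = j_{\fpp'}(P_\psi) + \alpha\, j_{\fpp'}(\sigma_{\fpp'} P_\psi)$ and then rewrite each piece using the three compatibility relations in \eqref{eqn:dependence-on-pp}. This converts the first summand into $j_\fpp(\sigma^{-1} P_\psi)$ and the second into $j_\fpp(\sigma_\fpp \circ \sigma^{-1}\, P_\psi)$, so the problem is reduced to understanding how $\sigma^{-1} \in \Gal(H/K)$ acts on $P_\psi$.

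Here one uses the fact that $P_\psi = e_\psi P_1$ is a $\psi$-eigenvector for the action of $G_\cO$: a short manipulation of the idempotent $e_\psi = \frac{1}{\#G_\cO}\sum_\tau \psi(\tau)\tau^{-1}$, reindexing the sum by $\tau' = \tau\sigma$, gives $\sigma^{-1} e_\psi = \psi(\sigma)^{-1} e_\psi$, hence $\sigma^{-1} P_\psi = \psi(\sigma)^{-1} P_\psi$. Substituting this into both terms yields
\[
j_{\fpp'}(P_\psi^\alpha) = \psi(\sigma)^{-1}\bigl(j_\fpp(P_\psi) + \alpha\, j_\fpp(\sigma_\fpp P_\psi)\bigr) = \psi(\sigma)^{-1}\, j_\fpp(P_\psi^\alpha),
\]
since $\sigma$ commutes with scalar multiplication on $E(H\otimes \Q_p)\otimes L$. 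As $\psi(\sigma)^{-1} \in \psi(G_\cO) \subset L^\times$, this is precisely the invariance statement claimed in the lemma.

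There is no real obstacle: the argument is entirely formal, relying only on the transport formulas \eqref{eqn:dependence-on-pp} and on the $\psi$-equivariance of $P_\psi$ under the $\Gal(H/K)$-action (note that one does \emph{not} need to know how $\sigma_\fpp$, which lies in $\Gal(H/\Q)\smallsetminus \Gal(H/K)$, acts on $P_\psi$, because the factor $\psi(\sigma)^{-1}$ can be pulled out before $\sigma_\fpp$ is applied). The mild care needed is only to check the order in which $\sigma_\fpp$ and $\sigma^{-1}$ are composed, which is exactly what the third identity of \eqref{eqn:dependence-on-pp} records.
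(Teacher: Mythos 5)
Your argument is correct and is essentially the paper's own proof: both rest on the transport identities \eqref{eqn:dependence-on-pp} together with the eigenvector relation $\sigma^{-1}e_\psi = \psi(\sigma)^{-1}e_\psi$, the only difference being that you expand $(1+\alpha\sigma_{\fpp'})$ termwise while the paper carries out the same manipulation in a single operator computation. No gaps.
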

 \begin{proof}
 Let $\fpp' = \sigma\ast \fpp$ be any other element of $\cP$.
 Then by \eqref{eqn:dependence-on-pp},
 \begin{eqnarray*}
  j_{\fpp'}(1+\alpha \sigma_{\fpp'}) P_\psi &=&  j_{\fpp} \circ \sigma^{-1} (1+ \alpha \sigma \sigma_{\fpp} \sigma^{-1}) e_\psi P_1 
 = j_{\fpp} \circ (1 + \alpha \sigma_{\fpp}) \sigma^{-1} e_\psi P_1 
 \\ &=&   \psi(\sigma)^{-1}   j_{\fpp} \circ (1 + \alpha \sigma_{\fpp}) P_\psi.
\end{eqnarray*}
 The result follows.
 \end{proof}

\medskip\noindent
{\em Examples}. 
 This paragraph describes a few numerical examples illustrating the scope and applicability of  the main results of this paper.
 By way of illustration, suppose that $E$ is an elliptic curve of prime conductor $N=p$, so that $M=1$. 
 In that special case the Atkin-Lehner sign $w_N$ is related to the local sign $\alpha$ by 
 $$ w_N = -\alpha.$$
 The following proposition reveals that the analytic non-vanishing hypothesis fails in the setting of the Stark-Heegner theorem for quadratic characters of \cite{BD-Ann} when $\epsilon = -1$:
\begin{proposition}
\label{prop:systematic-vanishing}
Let  $\psi$ be  a totally even quadratic ring class character of $K$ of  conductor prime to $N$.
Then $P_\psi^\alpha$ is trivial.
\end{proposition}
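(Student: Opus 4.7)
\emph{Proof proposal.} My plan is to reduce the statement directly to the dichotomy formula recorded in the introduction, which asserts $P_\psi^\alpha = 0$ whenever $\chi_2(p) = -\alpha$, and to verify this latter equality by a sign computation. To set things up, I would exploit the decomposition $V_\psi = \chi_1 \oplus \chi_2$ of the induced representation, where $\chi_2 = \chi_1 \chi_K$ and the ordering is fixed by the requirement that $L(E,\chi_1,s)$ and $L(E,\chi_2,s)$ have respective signs $+1$ and $-1$ in their functional equations. Because $\psi$ is totally even, complex conjugation acts trivially on $V_\psi$, so both $\chi_1$ and $\chi_2$ are even Dirichlet characters.

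Next, I would invoke the standard sign formula for twisted modular $L$-functions: for a primitive Dirichlet character $\chi$ of conductor prime to $N$, the sign of the functional equation of $L(E \otimes \chi, s)$ equals $-w_N \cdot \chi(-N)$. Specialising to $N = p$ and $w_N = -\alpha$, and using that $\chi_i(-p) = \chi_i(p)$ by evenness of $\chi_i$, this simplifies to $\alpha \cdot \chi_i(p)$. The prescribed signs for $i=1,2$ then force $\chi_1(p) = \alpha$ and $\chi_2(p) = -\alpha$, and substituting the latter into the dichotomy formula immediately yields $P_\psi^\alpha = 0$.

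The hard part is really not the proposition itself but the dichotomy formula, which is quoted in the introduction without proof and lies behind the Stark-Heegner theorem for quadratic characters of \cite{BD-Ann}, \cite{Mok2} and \cite{LMY}. Once that formula is granted, everything reduces to the short sign bookkeeping above; the hypothesis $N = p$ plays a crucial role precisely because it pins down $w_N = -\alpha$ with no extra contributions from primes dividing $M$, so that the local sign $\alpha$ is determined by the global Atkin-Lehner eigenvalue and the sign of $L(E\otimes\chi_i,s)$ depends only on $\chi_i(p)$ once $\chi_i$ is known to be even.
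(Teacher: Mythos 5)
Your argument is correct and essentially the same as the paper's: the identical sign computation $\sign(E,\chi)=\sign(E)\chi(-N)=-w_N\chi(-1)\chi(p)=\alpha\chi(p)$ for even $\chi$ of conductor prime to $N=p$ (using $w_N=-\alpha$) forces $\chi_1(p)=\alpha$ and $\chi_2(p)=-\alpha$. The only immaterial difference is the final citation: you invoke the dichotomy formula stating $P_\psi^\alpha=0$ when $\chi_2(p)=-\alpha$, while the paper concludes from the failure of the condition $\chi_2(p)=\alpha$ in \eqref{BDgenus-thm} of the Stark-Heegner theorem for quadratic characters, which is the same quoted fact in a different guise.
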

\begin{proof}
Let  $(\chi_1,\chi_2) = (\chi, \chi\chi_K)$ be the pair of even quadratic Dirichlet characters associated to $\psi$,
 ordered in such a way that 
$L(E,\chi_1,s)$ and $L(E,\chi_2,s)$ have signs $1$ and $-1$ respectively in their functional equations.
Writing $\sign(E,\chi) \in \{-1,1\}$ for the sign in
 the functional equation of the twisted 
$L$-function $L(E,\chi,s)$, it is well-known that, if the conductor of $\chi$ is relatively prime to   
$N$,  
$$ \sign(E,\chi) = \sign(E) \chi(-N) = - w_N \chi(-1) \chi(p)  = \alpha \chi(p) \chi(-1).$$
It follows that
$$ \alpha\chi_1(p) = 1, \qquad \alpha\chi_2(p)  = -1,$$
but equation 
\eqref{BDgenus-thm} in the Stark-Heegner theorem for quadratic characters implies $P_\psi^\alpha =0$.
\end{proof}
 
The systematic vanishing of $P_\psi^\alpha$ for even quadratic ring class characters of $K$ can be traced to the 
failure of the 
 analytic
non-vanishing hypothesis of the introduction, which arises for simple parity reasons.
 The failure  is expected to 
occur    essentially only when
$E$ has prime conductor $p$, i.e., when $M=1$, and never when $M$ satisfies
$\ordi_q(M)=1$ for some prime $q$. Because of Proposition \ref{prop:systematic-vanishing}, the main theorem of \cite{BD-Ann}  gives no information 
 about the 
Stark-Heegner point $P_\psi^\alpha$ attached to even quadratic ring class characters of conductor prime to 
$p$, on an elliptic curve of conductor $p$. 

On the other hand, in the setting of  Theorem A of the introduction,
 where $\psi$ has order $>2$, this phenomenon does not occur as
the non-vanishing of $P_\psi^\alpha$ and $P_\psi^{-\alpha}$ are {\em equivalent} to each other, in light of the 
irreducibility of the induced representation $V_\psi$. The numerical examples below show many instances of non-vanishing 
$P_\psi^\alpha$ for ring class characters of both even and odd parity.

\medskip
\medskip\noindent
{\em Example}.
Let $E: y^2 + y = x^3 - x $ be the elliptic curve of  
conductor $p=37$, whose Mordell-Weil group is generated by the point $(0,0)\in E(\Q)$.
Let  $K= \Q(\sqrt{5})$ be the real quadratic field 
of smallest discriminant in which $p$ is inert.
It is readily checked that $L(E/K,s)$ has a simple zero at $s=1$
and that $E(K)$ also has Mordell-Weil rank one. 
The curve $E$ has non-split multiplicative reduction at $p$ and hence $\alpha=-1$ in this case.
It is readily verified that the pair of odd characters $(\chi_1,\chi_2)$ attached to the quadratic imaginary fields
of discriminant $-4$ and $-20$ 
satisfy the three conditions in  \eqref{BDgenus-thm}, and hence the analytic non-vanishing hypothesis 
is satisfied for the triple $(E,K,\epsilon=1)$. In particular, Theorem A holds for $E$,  $K$,  and  all 
{\em even} ring class characters of $K$ of conductor prime to $37$.

Let $\cO$ be an order of $\cO_K$ with class number $3$, and let $H$ be the corresponding 
cubic extension of $K$. The prime  $\fpp$ of $H$ over $p$ and a generator $\sigma$
of $\Gal(H/K)$ can be chosen so that  the components
$$ P_1 := P_{\fpp}, \qquad P_2 := P_{\sigma\fpp}, \qquad P_3 := P_{\sigma^2\fpp} $$
in $E(H_\fpp) = E(K_p)$ of the Stark-Heegner point  in $E(H\otimes \Q_p)$ satisfy
$$ \overline{P}_1 = P_1, \qquad \overline{P}_2 = P_3, \qquad \overline{P}_3 = P_2.$$
Letting $\psi$ be the cubic character which sends $\sigma$ to $\zeta:= (1+\sqrt{-3})/2$, we find that 
\begin{eqnarray*}
 j_{\fpp}(P_\psi)  &=&  P_1 + \zeta P_2 + \zeta^2 P_3, \\
\sigma_\fpp( j_{\fpp}(P_\psi)) &=& \overline{P}_1 +  \zeta \overline{P}_2 +
\zeta^2 \overline{P}_3 = P_1 +  \zeta P_3  + 
 \zeta^2 {P}_2, \\
 j_{\fpp}(P_\psi^\alpha) &=& \sqrt{-3} \times (P_2- P_3) = \sqrt{-3} \times (P_2 -\overline{P}_2).
 \end{eqnarray*}
 The following table lists the Stark-Heegner  points $P_1$, $P_2$,  and $P_2-\overline{P}_2$
 attached to the first few orders $\cO\subset \cO_K$ of conductor $c = c(\cO)$ and 
  of class number three, calculated to a   
 $37$-adic accuracy of $2$ significant digits. (The numerical entries in the table below are
 thus  to be understood as 
 elements of $(\Z/37^2\Z)[\sqrt{5}]$.)
$$ 
\begin{array}{c|c|c|c}
c(\cO)  &  P_1   & P_2 & P_2 -\overline{P}_2  \\
\hline
 18 &(-635,  -256)  & (319 + 678\sqrt{5}, -48  1230\sqrt{5}) &  (-360, 684 + 27 \sqrt{5}) \\
 38 & (-154,447)  & (-588 + 1237 \sqrt{5} , 367  +386\sqrt{5})  & (-437, 684 + 87 \sqrt{5}) \\
  46 &  (223 , 12\cdot 37) &   (-112 +629\sqrt{5},  (-6 + 34\sqrt{5})\cdot 37)  & \infty  \\
  47 & (610,  -229) &  (539 + 71 \sqrt{5}, 10 + 439 \sqrt{5}) & (-293, 684  + 1132\sqrt{5}) \\
  54 & (533, -561)  & (679 + 984 \sqrt{5}, 
   391 +  862\sqrt{5})  &  (93, 684  + 673\sqrt{5})
  \end{array}
  $$
  Since the Mordell-Weil group of $E(K)$ has rank one, 
the  data in this table  is enough to conclude that the pro-$37$-Selmer groups of $E$ over
 the ring class fields of $K$ attached 
to the orders of conductors $18$, $38$, $47$ and $54$ have rank at least $3$. 
As for the order of conductor $46$, a calculation modulo $37^3$  reveals that
$P_2-\overline{P}_2$ is non-trivial, and hence the pro-$37$ Selmer group has rank $\ge 3$ over the ring class
field of that conductor as well.
Under   the Stark-Heegner conjecture, more is true:  the Stark-Heegner points above are
$37$-adic approximations of global points rather than mere Selmer classes.
But   recognising them as such (and thereby proving that the 
Mordell-Weil ranks are $\ge 3$) typically requires a calculations to  higher accuracy,
depending on the eventual height of the Stark-Heegner point as an algebraic point,
about which nothing is known of course a priori, and  which can behave somewhat erratically.
For example, the $x$-coordinates of the Stark-Heegner points attached to the order of conductor $47$ 
appear to satisfy the cubic polynomial 
$$x^3 - 319x^2 + 190 x + 420,$$
while those of the Stark-Heegner points for the order of conductor $46$ 
appear to satisfy the cubic polynomial 
$$ 2352347001 x^3 - 34772698791 x^2 + 138835821427 x - 136501565573 $$
with much larger coefficients, whose recognition requires a calculation to at least $7$ digits of $37$-adic accuracy.

 The table above produced many examples of non-vanishing $P_\psi^\alpha$ for $\psi$ even, and 
 in particular it verifies the non-vanishing hypothesis for Stark-Heegner points stated in the introduction,
 for the sign $\epsilon=-1$.
 This means that Theorem A is also true for {\em odd} ring class characters of $K$, even if the premise
 of \eqref{eqn:implication-weaker}
 is {\em never verified} for odd {\em quadratic} 
  characters of $K$.

\section{$p$-adic $L$-functions associated to Hida families} 
\label{3.2}

Let  $$\hf = \sum_{n\geq 1} a_n(\hf) q^n \in \Lambda_{\hf}[[q]]$$ be the Hida family of tame level $M$ and trivial tame character passing through $f$; cf.\,\cite{BD-Ann} and \cite[\S 1.3]{DR20a} for more details on the notations chosen for Hida families. 

Let $x_0 \in \cW_{\hf}^\circ$ denote the point of weight $2$ such that $\hf_{x_0}=f$. Note that $\hf_{x_0}\in S_2(N)$ is new at $p$, while for any $x\in \cW_{\hf}^\circ$ with $\wt(x)=k>2$, $\hf_x(q) = \hf_x^\circ(q)-\beta \hf_x^\circ(q^p)$ is the ordinary $p$-stabilisation of an eigenform $\hf_x^\circ$ of level $M=N/p$. We set $\hf_{x_0}^\circ = \hf_{x_0}=f$.

Let $K$ be a real quadratic field in which $p$ remains inert and all prime factors of $M$ split, and fix throughout a finite order anticyclotomic character $\psi$ of $K$ of conductor $c$ coprime to $D N$, with values in a finite extension $L_p/\Q_p$. Note that $\psi(p)=1$ as the prime ideal $p\cO_K$ is principal.

Under our running assumptions, the sign of the functional equation satisfied by the Hasse-Weil-Artin $L$-series $L(E/K,\psi,s) = L(f,\psi,s)$ is $$\varepsilon(E/K,\psi) = -1,$$ and in particular the order of vanishing of $L(E/K,\psi,s)$ at $s=1$ is odd. In contrast, at every classical point $x$ of even weight $k>2$ the sign of the functional equation satisfied by $L(\hf_x/K,\psi,s)$  is $$\varepsilon(\hf_x/K,\psi) = +1$$
and one expects generic non-vanishing of the central critical value $L(\hf_x/K,\psi,k/2)$.

In \cite[Definition 3.4]{BD-Ann},   a $p$-adic $L$-function
$$
\Lp(\hf/K,\psi) \in \Lambda_{\hf}
$$ 
associated to the Hida family $\hf$, the ring class character $\psi$ and a choice of collection of periods was defined, by interpolating the algebraic part of (the square-root of) the critical values  $L(\hf_x/K,\psi,k/2)$ for $x\in \cW_{\hf}^\circ$ with $\wt(x)=k=\kk+2\geq 2$. See also \cite[\S 4.1]{LMY} for a more general treatment, encompassing the setting considered here.

In order to describe this $p$-adic $L$-function in more detail, let $\Phi_{\hf_x,\C}$ denote the classical modular symbol associated to $\hf_x$ with values in the space $P_{\kk}(\C)$ of homogeneous polynomials of degree $\kk$ in two variables with coefficients in $\C$. The space of modular symbols is naturally endowed with an action of $\GL_2(\Q)$ and we let $\Phi_{\hf_x,\C}^+$ and $\Phi_{\hf_x,\C}^-$ denote the plus and minus eigencomponents of $\Phi_{\hf_x,\C}$ under the involution at infinity induced by $w_\infty = \smallmat 100{-1}$. 

As proved in \cite[\S 1.1]{KZ} (with slightly different normalizations as for the powers of the period $2\pi i$ that appear in the formulas, which we have taken into account accordingly), there exists a pair of collections of complex periods 
$$
\{\Omega^+_{\hf_x,\C}\}_{x\in \cW_{\hf}^\circ}, \quad \{\Omega^-_{\hf_x,\C}\}_{x\in \cW_{\hf}^\circ} \subset \C^\times
$$ 
satisfying the following two conditions:

\begin{enumerate}
\item[(i)] the modular symbols $$\Phi^+_{\hf_x} := \frac{\Phi^+_{\hf_x,\C}}{\Omega^+_{\hf_x,\C}}, \quad \Phi^-_{\hf_x} := \frac{\Phi^-_{\hf_x,\C}}{\Omega^-_{\hf_x,\C}} \quad \mbox{ take values in } \Q(\hf_x) = \Q(\{a_n(\hf_x)\}_{n\geq 1}),
$$ 
\item[(ii)] \mbox{ and } \quad
$\Omega^+_{\hf_x,\C} \cdot \Omega^-_{\hf_x,\C} = 4\pi^2 \langle \hf^\circ_x,\hf^\circ_x\rangle.$
\end{enumerate}

\vspace{0.3cm}

Note that conditions (i) and (ii) above only characterize $\Omega^{\pm}_{\hf_x,\C}$ up to multiplication by non-zero scalars in the number field $\Q(\hf_x)$. 

Fix an embedding $\bar\Q \hookrightarrow \bar\Q_p \subset \C_p$, through which we regard  $\Phi_{\hf_x}^{\pm}$ as $\C_p$-valued modular symbols.
In \cite{GS}, Greenberg and Stevens introduced measure-valued modular symbols $\mu^+_{\hf}$ and $\mu^-_{\hf}$ interpolating the classical modular symbols $\Phi_{\hf_x}^+$ and $\Phi_{\hf_x}^-$ as $x$ ranges over the classical specializations of $\hf$. 

More precisely, they show (cf.\,\cite[Theorem 5.13]{GS} and \cite[Theorem 1.5]{BD-Inv}) that for every $x\in \cW_{\hf}^\circ$, there exist $p$-adic periods 
\begin{equation}
\Omega^{+}_{\hf_x,p}, \, \Omega^{-}_{\hf_x,p}\in \C_p
\end{equation}
 such that the specialisation of $\mu^+_{\hf}$ and $\mu^-_{\hf}$ at $x$ satisfy
\begin{equation}\label{def-lambda}
 x(\mu^+_{\hf}) = \Omega^{+}_{\hf_x,p} \cdot \Phi_{\hf_x}^+,  \qquad x(\mu^-_{\hf}) = \Omega^{-}_{\hf_x,p} \cdot \Phi_{\hf_x}^-.
\end{equation}

Since no natural choice of periods $\Omega_{\hf_x,\C}^{\pm}$ presents itself, the scalars $\Omega^{+}_{\hf_x,p}$ and $\Omega^{-}_{\hf_x,p}$ are not expected to vary $p$-adically continuously. However, conditions (i) and (ii) above imply that the {\em product} $\Omega^{+}_{\hf_x,p} \cdot \Omega^{-}_{\hf_x,p}\in \C_p$ is a more canonical quantity, as it may also be characterized by the formula
\begin{equation}\label{def-lambda2}
x(\mu^+_{\hf}) \cdot x(\mu^-_{\hf}) = \Omega^{+}_{\hf_x,p}\Omega^{-}_{\hf_x,p} \cdot \frac{\Phi^+_{\hf_x,\C} \cdot \Phi^-_{\hf_x,\C}}{4\pi^2\langle \hf^\circ_x,\hf^\circ_x\rangle},
\end{equation}
which is independent of any choices of periods.

This suggests that the map $x \mapsto \Omega^{+}_{\hf_x,p}\Omega^{-}_{\hf_x,p}$ may extend to a $p$-adic analytic function, possibly after multiplying it by suitable Euler-like factors at $p$. And indeed, the following theorem is proved in one of the contributing articles of Bertolini, Seveso and Venerucci to this volume, and we refer to \cite[\S 3]{BSV2} for the proof.

\begin{theorem}\label{propo}
There exists a rigid-analytic function $\Lp(\mathrm{Sym}^2(\hf))$ on a neighborhood $U_{\hf}$ of $\cW_{\hf}$ around $x_0$ such that for all classical points $x\in U_{\hf} \cap \cW_{\hf}^\circ$ of weight $k\geq 2$:
\begin{equation}\label{BD-lambda}
\Lp(\mathrm{Sym}^2(\hf))(x) = \cE_0(\hf_x) \cE_1(\hf_x) \cdot \Omega^{+}_{\hf_x,p}\Omega^{-}_{\hf_x,p},
\end{equation}
where $\cE_0(\hf_x)$ and $\cE_1(\hf_x)$ are as in \cite[Theorem 1.3]{DR1}. Moreover, $\Lp(\mathrm{Sym}^2(\hf))(x_0)\in \Q^\times$.
\end{theorem}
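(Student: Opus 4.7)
The plan is to realise $\Lp(\mathrm{Sym}^2(\hf))$ as a suitable specialisation of Hida's $p$-adic $L$-function for the symmetric square of $\hf$, and verify that its interpolation property over classical weights recovers the right-hand side of \eqref{BD-lambda}.

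First I would start from the identity \eqref{def-lambda2}, which expresses the naively transcendental product $\Omega^{+}_{\hf_x,p}\Omega^{-}_{\hf_x,p}$ as a ratio of algebraic quantities divided by $\langle \hf_x^\circ,\hf_x^\circ\rangle$. Combined with Shimura's classical identity expressing $\langle \hf_x^\circ, \hf_x^\circ \rangle$ as an elementary archimedean period times the special value $L(\mathrm{Sym}^2 \hf_x^\circ, k)$, this shows that the right-hand side of \eqref{BD-lambda} is proportional to an algebraic normalisation of $L(\mathrm{Sym}^2 \hf_x^\circ, k)$, multiplied by Euler-type corrections at $p$ that account for whether one works with $\hf_x$ or with its new counterpart $\hf_x^\circ$. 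One can think of this as a reformulation of the target in terms of a classical quantity known to be amenable to $p$-adic interpolation.

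Next I would invoke the construction of a $p$-adic $L$-function for $\mathrm{Sym}^2\hf$ (due to Hida, and subsequently refined by Dabrowski--Delbourgo and Schmidt), specialised to a one-variable function on the weight space $\cW_{\hf}$ of $\hf$. Its interpolation formula at classical $x\in \cW_{\hf}^\circ$ of weight $k\ges 2$ recovers exactly the algebraic normalisation of $L(\mathrm{Sym}^2\hf_x^\circ, k)$ up to explicit Euler factors at $p$. Rescaling by elementary rigid-analytic factors on a neighbourhood $U_{\hf}$ of $x_0$ produces the candidate $\Lp(\mathrm{Sym}^2(\hf))$; the interpolation formula \eqref{BD-lambda} is then verified point by point, with the Euler factors $\cE_0(\hf_x)$ and $\cE_1(\hf_x)$ of \cite[Theorem 1.3]{DR1} emerging precisely as the correction terms that convert the new eigenform $\hf_x^\circ$ into its ordinary $p$-stabilisation $\hf_x$. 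The non-vanishing $\Lp(\mathrm{Sym}^2(\hf))(x_0)\in \Q^\times$ is then handled directly: the value $L(\mathrm{Sym}^2 f, 2)$ lies in the region of absolute convergence and is therefore a non-zero algebraic multiple of $\pi^3\langle f,f\rangle$, while $\cE_0(f)$ and $\cE_1(f)$ are explicit non-vanishing expressions in $\alpha = a_p(f) = \pm 1$; rationality then follows from the classical algebraicity theorems for critical values of the symmetric square.

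The main obstacle is essentially one of bookkeeping: one must carefully align the $p$-adic periods $\Omega^{\pm}_{\hf_x,p}$ produced by Greenberg--Stevens theory with the canonical periods implicit in Hida's construction, so that the Euler factors in \eqref{BD-lambda} agree precisely with those of \cite[Theorem 1.3]{DR1}, and one must exhibit the rigid-analytic rescaling on $U_{\hf}$ explicitly. This matching, which requires a delicate comparison of normalisations across the two constructions, is carried out in detail in \cite[\S 3]{BSV2}, to which we refer for the complete argument.
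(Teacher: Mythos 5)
The paper does not actually prove this theorem: it is imported from Bertolini--Seveso--Venerucci, and the proof referred to in \cite[\S 3]{BSV2} is a direct $\Lambda$-adic construction -- one works with Hida's ordinary modules and the Greenberg--Stevens measure-valued modular symbols $\mu^{\pm}_{\hf}$ themselves, and obtains \eqref{BD-lambda} by specialising and comparing with \eqref{def-lambda}--\eqref{def-lambda2} -- rather than an invocation of a previously constructed symmetric-square $p$-adic $L$-function of Hida/Schmidt/Dabrowski--Delbourgo type. Your route is therefore genuinely different, and its pivotal step is exactly the one you defer: to get \eqref{BD-lambda} you must match the canonical products $\Omega^{+}_{\hf_x,p}\Omega^{-}_{\hf_x,p}$, which are \emph{defined} through $\mu^{\pm}_{\hf}$ via \eqref{def-lambda2}, with the periods implicit in the classical symmetric-square construction, uniformly in $x$ and with the correction factors coming out as precisely $\cE_0(\hf_x)\cE_1(\hf_x)$. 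That comparison is not carried out in \cite[\S 3]{BSV2}: the remark immediately following the theorem makes clear that the identification of $\Lp(\mathrm{Sym}^2(\hf))$ with a genuine symmetric-square/adjoint object (equivalently with a generator of Hida's congruence ideal) is left as an expectation attributed to Hsieh, not a proved statement. So the citation does not close the gap, and nothing else in your sketch does; this is the missing substance of the theorem, not bookkeeping.

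The argument for $\Lp(\mathrm{Sym}^2(\hf))(x_0)\in\Q^\times$ is also off target. At $x_0$ the form $f$ is $p$-new (Steinberg at $p$), exactly the locus where interpolation of symmetric-square $p$-adic $L$-functions is delicate (trivial-zero phenomena, degenerate or excluded interpolation formulas), and in any case the content of the claim is a statement about $p$-adic periods: with the conventions of \cite[Theorem 1.3]{DR1} the factors $\cE_0(f)\cE_1(f)$ at the $p$-new point are elementary and nonzero, so the assertion amounts to $\Omega^{+}_{f,p}\Omega^{-}_{f,p}\in\Q^\times$. That is established by showing the weight-two specialisations of $\mu^{\pm}_{\hf}$ are nonzero rational multiples of the rational modular symbols attached to $E$ (in the spirit of \cite[Theorem 1.5]{BD-Inv} and \cite{BD-Ann}), not by appealing to the non-vanishing and algebraicity of the archimedean value $L(\mathrm{Sym}^2 f,2)$: the latter says nothing about the $p$-adic period product, and even for your candidate function it does not rule out a vanishing $p$-interpolation factor at the Steinberg point. (Incidentally, $s=2$ sits on the edge, not in the interior, of the region of absolute convergence for $\mathrm{Sym}^2$ of a weight-two form; the non-vanishing there comes from the Shimura/Rankin relation with $\langle f,f\rangle$.)
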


\begin{remark}
The motivation for denoting $\Lp(\mathrm{Sym}^2(\hf))$ the $p$-adic function appearing above relies on the fact that $\Omega^{\pm}_{\hf_x,p}$ are $p$-adic analogues of the complex periods $\Omega^{\pm}_{\hf_x,\C}$. As is well-known, 
the product $\Omega^+_{\hf_x,\C} \cdot \Omega^-_{\hf_x,\C} = 4\pi^2 \langle \hf^\circ_x,\hf^\circ_x\rangle$ is essentially the near-central
 critical value of the classical $L$-function associated to the symmetric square of $\hf^\circ_x$. In addition to this, as M. L. Hsieh remarked to us,  it might not be difficult to show that $\Lp(\mathrm{Sym}^2(\hf))$ is a generator of Hida's congruence ideal in the sense of \cite[\S 1.4, p.4]{Hs}.
\end{remark}







The result characterizing the $p$-adic $L$-function $\Lp(\hf/K,\psi)$ alluded to above is  \cite[Theorem 3.5]{BD-Ann}, which we recall below. Although \cite[Theorem 3.5]{BD-Ann} is stated in loc.\,cit.\,only for  genus characters, the proof has been recently generalized to arbitrary (not necessarily quadratic)
ring class characters $\psi$ of conductor $c$ with $(c,DN)=1$ by Longo, Martin and Yan in \cite[Theorem 4.2]{LMY}, by
employing Gross-Prasad test vectors to
 extend Popa's formula \cite[Theorem 6.3.1]{Po} to this setting.

Let  $\mathfrak{f}_c\in K^\times$ denote the explicit constant introduced at the first display of \cite[\S 3.2]{LMY}. It only depends on the conductor $c$ and its square lies in $\Q^\times$. 

\begin{theorem}
\label{bd2} 
The $p$-adic $L$-function $L_p(\hf/K,\psi)$ satisfies the 
following interpolation property: for all $x\in \cW_\hf^\circ$ of weight $\wt(x)=k=\kk+2 \geq 2$, we have
$$
\Lp(\hf/K,\psi)(x) = \mathfrak{f}_{\hf, \psi}(x) \times L(\hf^\circ_x/K,\psi,k/2)^{1/2}
$$
where
$$
\mathfrak{f}_{\hf,\psi}(x) = (1-\alpha_{\hf_x}^{-2} p^{\kk})\cdot  \frac{ \mathfrak{f}_c \cdot (Dc^2)^{\frac{\kk+1}{4}}  (\frac{\kk}{2})!}{(2 \pi i)^{\kk/2}} \cdot \frac{\Omega^{\epsilon_\psi}_{\hf_x,p}}{\Omega^{\epsilon_\psi}_{\hf_x,\C}}.
$$
\end{theorem}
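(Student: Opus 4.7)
The plan is to specialize the defining integral formula for $\Lp(\hf/K,\psi)$ at a classical point $x$ and then identify the resulting expression as the square root of the desired $L$-value via a Waldspurger-type formula. Following \cite[Def.\,3.4]{BD-Ann}, $\Lp(\hf/K,\psi)$ is constructed as a weighted sum
$$ \Lp(\hf/K,\psi) = \sum_{[\faa] \in \Pic(\cO_c)} \psi(\faa) \cdot \int_{\Delta(\faa)} d\mu^{\epsilon_\psi}_{\hf}, $$
where $\Delta(\faa)$ is the region attached to each ideal class built out of the same Heegner-type data $(\tau_\faa,\gamma_\faa)$ used in \S \ref{sec:SH} to define Stark-Heegner points. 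The choice of sign $\epsilon_\psi$ is dictated by the parity of $\psi$, so that only the nonvanishing eigencomponent of the modular symbol contributes.

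By the Greenberg-Stevens interpolation property \eqref{def-lambda}, the specialization at $x \in \cW_\hf^\circ$ of $\mu^{\epsilon_\psi}_{\hf}$ equals $\Omega^{\epsilon_\psi}_{\hf_x,p} \cdot \Phi^{\epsilon_\psi}_{\hf_x}$. Integrating the degree-$\kk$ polynomial $(X-\tau_\faa Y)^{\kk/2}(X-\bar\tau_\faa Y)^{\kk/2}$ against the specialized measure then yields
$$ \Lp(\hf/K,\psi)(x) = (1-\alpha_{\hf_x}^{-2}p^{\kk}) \cdot \Omega^{\epsilon_\psi}_{\hf_x,p} \cdot \sum_{[\faa]} \psi(\faa) \cdot \Phi^{\epsilon_\psi}_{\hf_x}\bigl(\{y,\gamma_\faa y\}\bigr) \bigl[Q_\faa^{\kk/2}\bigr], $$
with $Q_\faa(X,Y) = (X-\tau_\faa Y)(X-\bar\tau_\faa Y)$. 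The Euler factor $(1-\alpha_{\hf_x}^{-2}p^{\kk})$ arises precisely from the fact that $\hf_x$ is the ordinary $p$-stabilization of the classical form $\hf_x^\circ$ when $k>2$, accounting for the difference between integrating over $\mathbf{Z}_p^\times \times \mathbf{Z}_p^\times$ and over $\mathbf{Z}_p \times \mathbf{Z}_p$.

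The crucial step is to recognize the cyclic sum of modular symbol values as essentially the square root of $L(\hf_x^\circ/K,\psi,k/2)$. When $\psi$ is a genus character, this is exactly Popa's explicit toric period formula \cite[Thm.\,6.3.1]{Po}, as exploited in \cite[Thm.\,3.5]{BD-Ann}. For arbitrary ring class characters the input must be replaced by its extension to a suitable Gross-Prasad test vector in the automorphic representation attached to $\hf_x^\circ$, adapted at the ramified primes of $\psi$; this extension is the content of \cite[Thm.\,4.2]{LMY}, and it produces the constants $\mathfrak{f}_c$, $(Dc^2)^{(\kk+1)/4}$, $(\kk/2)!$ and $(2\pi i)^{-\kk/2}$ appearing in $\mathfrak{f}_{\hf,\psi}(x)$. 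The final ratio $\Omega^{\epsilon_\psi}_{\hf_x,p}/\Omega^{\epsilon_\psi}_{\hf_x,\C}$ comes from converting between the algebraically normalized $\Phi^{\epsilon_\psi}_{\hf_x}$ and the transcendental $\Phi^{\epsilon_\psi}_{\hf_x,\C}$.

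The hard part is the generalized Popa formula itself: identifying the correct Gross-Prasad test vector compatible with the $p$-stabilization and with the local components at primes dividing $Mc$, and then evaluating the local zeta integrals explicitly to recover the stated archimedean and conductor-dependent constants. This delicate local harmonic analysis is carried out in \cite[\S 4]{LMY}, and once granted, the theorem follows by merely assembling the factors as above.
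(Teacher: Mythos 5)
Your proposal is correct and follows essentially the same route as the paper, which itself simply recalls this interpolation formula from \cite[Theorem 3.5]{BD-Ann} and its extension to arbitrary ring class characters in \cite[Theorem 4.2]{LMY} via Gross--Prasad test vectors generalizing Popa's formula \cite[Theorem 6.3.1]{Po}. Your unpacking of the specialization of the measure-valued modular symbol (Greenberg--Stevens), the origin of the Euler factor from $p$-stabilization, and the identification of the toric sum with the square root of the central value matches the argument the paper delegates to those references.
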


\section{A $p$-adic Gross-Zagier formula for Stark-Heegner points}\label{sec:GZSH}

One of the main theorems of  \cite{BD-Ann} is a formula for the derivative of $\Lp(\hf/K,\psi)$ at the point $x_0$, relating it to the formal group logarithm of a Stark-Heegner point. This formula shall be crucial for relating these points to generalized Kato classes and eventually proving our main results.

\begin{theorem}\label{GZ-SH}
The $p$-adic $L$-function  $\Lp(\hf/K,\psi)$ vanishes at the point $x_0$ of weight $2$ and
\begin{equation}\label{thm-bd2}
\frac{d}{dx} \Lp(\hf/K,\psi)_{|x=x_0} = \frac{1}{2}\log_p(P_\psi^\alpha).
\end{equation}

\end{theorem}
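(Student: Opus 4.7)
The vanishing at $x_0$ is immediate from Theorem \ref{bd2}: at $x=x_0$ one has $k=2$ and $\kk=0$, so the Euler-like factor $1-\alpha_{\hf_{x_0}}^{-2} p^{\kk} = 1-\alpha^{-2}$ vanishes, as $\alpha = \pm 1$. This is the exceptional zero phenomenon at weight two. For the derivative, the plan is to follow the Bertolini-Darmon approach via Greenberg-Stevens measure-valued modular symbols. First, express $\Lp(\hf/K,\psi)(x)$ as a $\Pic(\cO)$-weighted sum of toric period integrals of $\mu^{\epsilon_\psi}_{\hf}$ along geodesic paths attached to $\tau_\faa$ and $\gamma_\faa$; this expression is produced by combining the interpolation property \eqref{def-lambda} of the Greenberg-Stevens symbols with Popa's central value formula, in the generality of arbitrary anticyclotomic characters provided by \cite[Theorem 4.2]{LMY}. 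At $x_0$, the specialisation $x_0(\mu^{\epsilon_\psi}_{\hf})$ agrees (up to the period $\Omega^{\epsilon_\psi}_{f,p}$) with the classical modular symbol of $f$, and the exceptional zero simply reflects the vanishing of the $p$-Euler factor. Differentiating once in the weight variable brings down a factor of $\log_p(t)$ inside the integral against $d\mu^{\epsilon_\psi}_{f}$, and by the very definition of the multiplicative integral $\mint{\cdot}{\cdot}{\cdot}$ one recognises
$$\int \log_p(t) \, d\mu^{\epsilon_\psi}_{f}(\{x \to \gamma_\faa x\})(t) = \log_p J_\faa, \qquad J_\faa = \mint{\tau_\faa}{x}{\gamma_\faa x}.$$

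Applying Tate's uniformisation $\Phi_{\rm Tate}: K_p^\times/q^{\Z} \to E(K_p)$ then converts $\log_p J_\faa$ into the formal group logarithm $\log_p y(\faa)$. Summing $\psi^{-1}(\faa) \log_p y(\faa)$ over ideal classes yields $\log_p j_{\fpp_0}(P_\psi)$, while the symmetry $\faa \leftrightarrow \faa^{-1}$ inherent in the toric period, combined with Proposition \ref{prop:forbenius-sh} (in which the local Atkin-Lehner sign at $p$ is exactly $\alpha$), produces the symmetrising operator $(1 + \alpha \sigma_{\fpp_0})$ and hence $P_\psi^\alpha$. The factor $\frac{1}{2}$ in the final identity is forced by the square-root interpolation formula of Theorem \ref{bd2}, since $\Lp(\hf/K,\psi)$ essentially interpolates $L^{1/2}$ rather than $L$ itself, and differentiating a square root at an exceptional zero halves the weight derivative of the underlying analytic function.

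The principal obstacle in carrying out this plan is the harmonisation of periods. Individually, the scalars $\Omega^+_{\hf_x,p}$ and $\Omega^-_{\hf_x,p}$ are not expected to vary $p$-adic-analytically in $x$, so one cannot naively compare the Greenberg-Stevens weight derivative of $\mu^{\epsilon_\psi}_{\hf}$ component-by-component with the square-root normalisation of Theorem \ref{bd2}. The remedy is Theorem \ref{propo}: although the individual $p$-adic periods do not interpolate, the product $\Omega^+_{\hf_x,p}\Omega^-_{\hf_x,p}$, multiplied by the canonical Euler factors $\cE_0(\hf_x)\cE_1(\hf_x)$, does extend to a rigid-analytic function $\Lp(\mathrm{Sym}^2(\hf))$ on a neighbourhood of $x_0$, and is non-vanishing there. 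This canonical global handle on the periods is precisely what allows one to coherently renormalise both sides of the proposed equality and pin down the exact constant $\frac{1}{2}$ in the formula relating $\Lp'(\hf/K,\psi)(x_0)$ to $\log_p(P_\psi^\alpha)$.
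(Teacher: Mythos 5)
Your overall route is the same as the paper's: the derivative formula \eqref{thm-bd2} is exactly the Greenberg--Stevens-style computation of \cite[Theorem 4.1]{BD-Ann} (extended to arbitrary ring class characters in \cite[Theorem 5.1]{LMY}, and to the quaternionic setting in \cite{longo-vigni}), which the paper invokes verbatim rather than reproving, and your outline --- $\psi$-weighted toric integrals of the measure-valued symbols $\mu^{\pm}_{\hf}$, differentiation in the weight variable producing $\log_p$ of the multiplicative periods $J_\faa$, Tate uniformisation, summation over $\Pic(\cO)$ --- is a fair summary of that argument. One caution on the vanishing at $x_0$: the paper's reason is that $x_0$ lies in the interpolation range and $\Lp(\hf/K,\psi)(x_0)$ is a \emph{non-zero} multiple of $L(f/K,\psi,1)^{1/2}$, which vanishes because the sign of the functional equation of $L(f/K,\psi,s)$ is $-1$. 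Your argument instead leans entirely on the vanishing of the factor $1-\alpha_{\hf_{x_0}}^{-2}p^{\kk}$; this is delicate at $x_0$ because that factor originates in the $p$-stabilisation of a level-$M$ eigenform, whereas $\hf_{x_0}=f$ is $p$-new, so the interpolation statement must be read with care at the weight-two point (and if the proportionality constant there really vanished, the interpolation would carry no information at $x_0$, contrary to the way the paper's proof uses it). The conclusion is the same, but the robust source of the zero is the forced odd-order vanishing of the complex central value.

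More substantively, your final paragraph misidentifies the mechanism behind the constant $\tfrac12$. Theorem \ref{propo} and the function $\Lp(\mathrm{Sym}^2(\hf))$ play no role in Theorem \ref{GZ-SH}; they are needed only later, in the factorisation of Theorem \ref{factorisation-DR}, where one must compare $\Lp^f(\htf,\testg_\zeta, \testh_{\alpha \zeta^{-1}})$ (normalised by Petersson norms) with the two anticyclotomic $p$-adic $L$-functions (normalised by the periods $\Omega^{\pm}_{\hf_x,\C}$). The derivative formula is proved directly on the defining expression of $\Lp(\hf/K,\psi)$ as a weighted sum of integrals against $\mu^{\epsilon_\psi}_{\hf}$, so no interpolation of the $p$-adic periods across weights is required; the $\tfrac12$ arises because the integrand carries the exponent $(k-2)/2$ (the central-point, square-root normalisation), and differentiating it at $k=2$ brings down $\tfrac12\log_p$ of the relevant quantity. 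Also, your parenthetical that ``the local Atkin--Lehner sign at $p$ is exactly $\alpha$'' is off: Proposition \ref{prop:forbenius-sh} concerns the global eigenvalue $w_N$, the $W_p$-eigenvalue of a $p$-new weight-two form is $-a_p(f)=-\alpha$, and the operator $(1+\alpha\sigma_{\fpp})$ in fact emerges from the structure of the double-integral computation in \cite{BD-Ann} rather than from that proposition. None of this derails your main line --- which is, after all, the cited proof --- but as written the last paragraph attributes the key normalisation to the wrong ingredient.
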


\begin{proof}
The vanishing of $\Lp(\hf/K,\psi)$ at $x=x_0$ is a direct consequence of the assumptions and definitions, because $x=x_0$ lies in the region of interpolation of the $p$-adic $L$-function and therefore $\Lp(\hf/K,\psi)(x_0)$ is a non-zero multiple of the central critical value $L(f/K,\psi,1)$. This $L$-value vanishes as remarked in the paragraph right after \eqref{eqn:product-L}.
 
The formula for the derivative follows verbatim as in the proof of \cite[Theorem 4.1]{BD-Ann}. See also \cite[Theorem 5.1]{LMY} for the statement in the generality required here. Finally, we refer to \cite{longo-vigni} for a formulation and proof of this formula in the setting of quaternionic Stark-Heegner points, under the general assumption of \eqref{eqn:lrv}.
\end{proof}

\section{Setting the stage}
\label{sec:proof-main}

In this section we set the stage for the proofs of Theorems A and B by introducing a particular choice of triplet of eigenforms $(f,g,h)$ of weights $(2,1,1)$.
Let $E/\Q$ be an elliptic curve having multiplicative reduction at a prime $p$ and set $\alpha = a_p(E) = \pm 1$. Let
$$
\psi: \Gal(H/K) \lra L^\times
$$ be an anticyclotomic character of a real quadratic field $K$ satisfying the hypotheses stated in the introduction. 

In particular we assume that a prime ideal $\fpp$ above $p$ in $H$ has been fixed and either of the {\em non-vanishing hypotheses} stated in loc.\,cit.\,holds; these hypotheses give rise to a character $\xi$ of $K$ having parity opposite to that of $\psi$ that we fix for the remainder of this note, satisfying that the local Stark-Heegner point $P_{\xi,\fpp}^\alpha$ is non-zero. 

As shown in \cite[Lemma 6.9]{DR2}, there exists a (not necessarily anti-cyclotomic) character  $\psi_0$ of finite order of $K$ and conductor prime to $D N_E$ such that 
\begin{equation}\label{psi1}
\psi_0/\psi'_0 = \xi/\psi.
\end{equation}
Since by hypothesis $\xi/\psi $ is totally odd, it follows that $\psi_0$ has mixed signature $(+,-)$ with respect to the two real embeddings of $K$.

Let $\mathfrak c\subset \mathcal O_K$ denote the conductor of $\psi_0$
and let $\chi$
denote the odd central Dirichlet character of $\psi_0$. Let $\chi_K$ also denote the quadratic Dirichlet  character associated to $K/\Q$.

Let $f\in S_2(pM_f)$ denote the modular form associated to $E$ by modularity. Likewise, set
$$M_g=D c^2\cdot \mathrm{N}_{K/\Q}(\mathfrak c) \quad \mbox{and} \quad M_h=D\cdot \mathrm{N}_{K/\Q}(\mathfrak c)
$$ 
and define the eigenforms 
$$
g=\theta(\psi_0  \psi)\in S_1(M_g,\chi \chi_K) \quad \mbox{ and } \quad h=\theta(\psi_0^{-1})\in S_1(M_h,\chi^{-1} \chi_K)$$ to be the theta series associated to the characters $\psi_0 \psi$ and $\psi_0^{-1}$, respectively. 

Recall from the introduction that $E[p]$ is assumed to be irreducible as a $G_{\Q}$-module. This implies that the mod $p$ residual Galois representation attached to $f$ is irreducible, and thus also non-Eisenstein mod $p$. The same claim holds for $g$ and $h$ because $\psi$ and $\xi$ have opposite signs and $p$ is odd, hence $\xi \not\equiv \psi^{\pm  1}  \,(\mathrm{mod}\, p)$.

Note that $p\nmid M_f M_g M_h$. 
As in previous sections, we let $M$ denote the least common multiple of $M_f$, $M_g$ and $M_h$.  The Artin representations $V_g$ and $V_h$ associated to $g$ and $h$ are both odd and unramified at the prime $p$. Since $p$ remains inert in $K$, the arithmetic frobenius $\Fr_p$ acts on $V_g$ and $V_h$ with eigenvalues
$$
\{ \alpha_g,  \beta_g \} = \{\zeta, \, -\zeta\}, \qquad  \{\alpha_h, \beta_h \} = \{ \zeta^{-1}, -\zeta^{-1}\},
$$
where $\zeta$ is a root of unity satisfying $\chi(p)=-\zeta^2$. 

In light of \eqref{psi1} we have $\psi_0  \psi /\psi_0 = \psi$ and $\psi_0  \psi /\psi_0' = \xi$, hence the tensor product of $V_g$ and $V_h$ decomposes as
\begin{equation}\label{psi-psi1}
V_{gh} = V_g \otimes V_h \simeq \mathrm{Ind}^{\Q}_K(\psi) \oplus \mathrm{Ind}^{\Q}_K(\xi) \quad \mbox{as } G_{\Q}\mbox{-modules}
\end{equation}
and
$$
V_g = V_g^{\alpha_g} \oplus V_g^{\beta_g}, \quad V_h = V_h^{\alpha_h} \oplus V_h^{\beta_h}, \quad V_{gh} = \bigoplus_{(a,b)}  V_{gh}^{a b} \quad \mbox{as } G_{\Q_p}\mbox{-modules}
$$
where $ (a,b)$ ranges through the four pairs $(\alpha_g,\alpha_h),(\alpha_g,\beta_h),(\beta_g,\alpha_h),(\beta_g,\beta_h)$. Here $V_g^{\alpha_g}$, say, is the $G_{\Q_p}$-submodule of $V_g$ on which $\Fr_p$ acts with eigenvalue $\alpha_g$, and similarly for the remaining terms.



Let $W_p$ be an arbitrary self-dual Artin representation with coefficients in $L_p$ and factoring through the Galois group of a finite extension $H$ of $\Q$. Assume $W_p$ is unramified at $p$.
There is a canonical isomorphism
\begin{eqnarray}
\label{eqn:coho-induced}
 H^1(\Q, V_p(E)\otimes W_p) &\simeq&  (H^1(H, V_p(E))\otimes W_p)^{\Gal(H/\Q)} \\
 \nonumber
  &=&  \mathrm{Hom}_{\Gal(H/\Q)}(W_{p}, H^1(H,V_p(E))),
  \end{eqnarray}
  where the the second equality follows from the self-duality of $W_p$. Kummer theory gives rise to a homomorphism
  \begin{equation}
  \label{eqn:def-delta-global}
\delta: E(H)^{W_p} := \mathrm{Hom}_{\Gal(H/\Q)}(W_p, E(H)\otimes L_p)\lra   H^1(\Q,V_p(E) \otimes W_p). 
\end{equation}
    For each rational prime $\ell$, the 
    maps \eqref{eqn:coho-induced} and \eqref{eqn:def-delta-global}
    admit local counterparts
 \begin{eqnarray*}
    H^1(\Q_{\ell}, V_p(E)\otimes W_p) &\simeq&    \mathrm{Hom}_{\Gal(H/\Q)}(W_{p}, \oplus_{\lambda|\ell} H^1(H_\lambda,V_p(E))), \\
    \delta_\ell: \left(\oplus_{\lambda|\ell} E(H_\lambda)\right)^{W_p}  &\lra&
    H^1(\Q_{\ell},V_p(E) \otimes W_p),
    \end{eqnarray*}
   for which the following diagram commutes:
\begin{equation}
   \label{eqn:comm-diag-resp}
   \xymatrix{  
    E(H)^{W_p} \ar[r]^{\delta \qquad} \ar[d]^{\mathrm{res}_\ell}  & H^1(\Q, V_p(E)\otimes W_p)  \ar[d]^{\mathrm{res}_\ell} \\
    \left(\oplus_{\lambda|\ell} E(H_\lambda)\right)^{W_p} \ar[r]^{\delta_\ell \quad} &
    H^1(\Q_\ell, V_p(E)\otimes W_p).  }
\end{equation}

For primes $\ell\ne p$, it follows from \cite[(2.5) and (3.2)]{Ne2} that $H^1(\Q_\ell,V_p(E)\otimes W_p)=0$. (We warn however that if we were working with integral coefficients, these cohomology groups may contain non-trivial torsion.) For $\ell=p$, the Bloch-Kato submodule $H^1_{\fin}(\Q_p,V_p(E)\otimes W_p)$ is the subgroup of $H^1(\Q_p, V_p(E)\otimes W_p)$ formed by classes of {\em crystalline} extensions of Galois representations of $V_p(E)\otimes W_p$ by $\Q_p$. It may also be identified with the image of the local connecting homomorphism $\delta_p$.


\begin{lemma}\label{partial} 
There is a natural isomorphism of $L_p$-vector spaces
\begin{equation*}
H^1_{\fin}(\Q_p,V_p(E)\otimes W_p) =  H^1_{\fin}(\Q_p,V^+_f\otimes W_p^{\Fr_p=\alpha}) \oplus H^1(\Q_p,V^+_f\otimes W_p/W_p^{\Fr_p = \alpha}),
\end{equation*}
where recall $\alpha=a_p(E)= \pm 1$.
\end{lemma}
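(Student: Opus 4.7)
The plan is to decompose both sides according to the semisimple action of $\Fr_p$ on $W_p$ and then analyse each summand via the Tate filtration on $V_p(E)$. Since $W_p$ is unramified at $p$ and factors through a finite quotient of $G_{\Q_p}$, Frobenius acts semisimply, yielding a $G_{\Q_p}$-equivariant splitting $W_p = W_p^{\Fr_p = \alpha} \oplus W_p'$ with $W_p' \cong W_p/W_p^{\Fr_p = \alpha}$. This reduces the lemma to the two separate identifications
\begin{equation*}
H^1_{\fin}(\Q_p, V_p(E) \otimes W_p^{\Fr_p = \alpha}) = H^1_{\fin}(\Q_p, V^+_f \otimes W_p^{\Fr_p = \alpha})
\end{equation*}
and $H^1_{\fin}(\Q_p, V_p(E) \otimes W_p') = H^1(\Q_p, V^+_f \otimes W_p')$. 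In both cases the main ingredient is the Tate short exact sequence
\begin{equation*}
0 \to V^+_f \otimes W \to V_p(E) \otimes W \to V^-_f \otimes W \to 0
\end{equation*}
arising from the multiplicative reduction of $E$ at $p$; recall that $V^+_f \cong \Q_p(1) \otimes \eta_\alpha$ and $V^-_f \cong \Q_p \otimes \eta_\alpha$, where $\eta_\alpha$ is the unramified character sending $\Fr_p$ to $\alpha$.

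For the summand with $W = W_p'$, Frobenius acts on the unramified representation $V^-_f \otimes W_p'$ with eigenvalues $\alpha\mu$ ($\mu \ne \alpha$), none equal to $1$. Hence $H^0(\Q_p, V^-_f \otimes W_p') = 0$ and $H^1_{\fin}(\Q_p, V^-_f \otimes W_p')$, which equals the unramified cohomology here, also vanishes. The long exact sequence then gives an injection $H^1(\Q_p, V^+_f \otimes W_p') \hookrightarrow H^1(\Q_p, V_p(E) \otimes W_p')$, and by functoriality of the Bloch--Kato subspace the projection of $H^1_{\fin}(\Q_p, V_p(E) \otimes W_p')$ to $H^1(\Q_p, V^-_f \otimes W_p')$ lands in $H^1_{\fin}=0$. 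Thus $H^1_{\fin}(\Q_p, V_p(E) \otimes W_p')$ sits inside $H^1(\Q_p, V^+_f \otimes W_p')$, and the standard Bloch--Kato dimension count via the tangent space $D_{\dR}/\Fil^0 D_{\dR}$ shows both spaces have dimension $\dim W_p'$. Finally $H^1(V^+_f \otimes W_p') = H^1_{\fin}(V^+_f \otimes W_p')$, since all Hodge--Tate weights there equal $1$.

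For the summand with $W = W_p^{\Fr_p = \alpha}$, the equality $\alpha^2 = 1$ forces $V^+_f \otimes W \cong \Q_p(1)^n$ and $V^-_f \otimes W \cong \Q_p^n$, where $n = \dim W_p^{\Fr_p = \alpha}$. Functoriality yields a map $H^1_{\fin}(V^+_f \otimes W) \to H^1_{\fin}(V_p(E) \otimes W)$, both sides of dimension $n$ by the same Bloch--Kato count. The hard step is injectivity: its kernel would consist of classes in $H^1_{\fin}(V^+_f \otimes W) = (\Z_p^\times \otimes_{\Z} \Q_p)^n \subset (\Q_p^\times \otimes_{\Z} \Q_p)^n$ that vanish in $H^1(V_p(E) \otimes W)$, and such classes must lie in the image of the connecting homomorphism $H^0(V^-_f \otimes W) \to H^1(V^+_f \otimes W)$. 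This image is the line spanned by the Tate period $q_E \in \Q_p^\times \otimes_{\Z} \Q_p$, and it meets $(\Z_p^\times \otimes_{\Z} \Q_p)^n$ only at zero precisely because $v_p(q_E) > 0$. This is the main obstacle of the proof: it reflects the distinction between crystalline and merely semistable extension classes, and is the unique place where the multiplicative reduction of $E$ enters in an essential way.
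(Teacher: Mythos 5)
Your proof is correct, but it follows a genuinely different route from the paper's. You split $W_p$ into Frobenius eigenspaces at the outset and then treat each summand by combining the Tate exact sequence $0\to V^+_f\otimes W\to V_p(E)\otimes W\to V^-_f\otimes W\to 0$ with functoriality of the Bloch--Kato condition, the dimension formula $\dim H^1_{\fin}=h^0+\dim D_{\dR}/\Fil^0$ together with the local Euler characteristic, and, for the $\Fr_p=\alpha$ summand, the key observation that the image of the connecting map $H^0(\Q_p,V^-_f\otimes W_p^{\Fr_p=\alpha})\to H^1(\Q_p,V^+_f\otimes W_p^{\Fr_p=\alpha})$ is spanned by the Kummer class of the Tate period $q_E$ and meets $\Z_p^\times\hat\otimes_{\Z_p}\Q_p$ trivially because $v_p(q_E)>0$. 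The paper argues differently: it first identifies $H^1_{\fin}$ with $H^1_{\mathrm{g}}$ (using that $V_p(E)\otimes W_p$ has no unramified submodule) and then invokes Flach's lemma to rewrite the Selmer condition in Greenberg-ordinary form, as the kernel of $H^1(\Q_p,V_p(E)\otimes W_p)\to H^1(I_p,V^-_p(E)\otimes W_p)$, finishing with the explicit description of $V^{\pm}_p(E)$ as unramified twists of $\Q_p(\varepsilon_{\cyc})$ and $\Q_p$ and with Kummer theory for the $\Q_p(1)$-component. Your version is more self-contained (no appeal to $H^1_{\fin}=H^1_{\mathrm{g}}$ or to Flach) and makes explicit exactly where multiplicative reduction enters, namely the transversality of the $q_E$-span with the finite subspace --- a point the paper leaves implicit when it identifies the kernel of the map to $H^1(\Q_p,V^-_p(E)\otimes W_p)$ with $H^1(\Q_p,V^+_p(E)\otimes W_p)$, which is literally true only up to the image of the connecting map; the paper's route, in turn, is shorter given its references and displays the Selmer condition in the ordinary form used elsewhere. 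Three small points to tidy: when $\dim W_p^{\Fr_p=\alpha}>1$ the image of the connecting map is not a line but the span of $q_E\otimes W_p^{\Fr_p=\alpha}$ (your valuation argument applies componentwise); your dimension count for $H^1_{\fin}(\Q_p,V_p(E)\otimes W_p^{\Fr_p=\alpha})$ tacitly uses $H^0(\Q_p,V_p(E)\otimes W_p^{\Fr_p=\alpha})=0$, which follows from the non-splitness of the Tate extension, i.e.\ from the same fact about $q_E$ you already invoke; and the closing remark that $H^1=H^1_{\fin}$ for $V^+_f\otimes W_p'$ is correct but not ``because the Hodge--Tate weights are $1$'' (compare $\Q_p(1)$ itself) --- rather because the nontrivial unramified twist kills both $H^0$ of the Kummer dual and the relevant $\varphi=1$ eigenspace, though this remark is anyway not needed for the statement.
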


 \begin{proof} We firstly observe that $H^1_{\fin}(\Q_p,V_p(E)\otimes W_p) = H^1_{\mathrm{g}}(\Q_p,V_p(E)\otimes W_p)$ by e.g.\,\cite[Prop. 2.0 and Ex. 2.20]{Be}, because $V_p(E)\otimes W_p$ contains no unramified {\em submodule}.
  As shown in \cite[Lemma , p.125]{Fl}, it follows that
 $$
 H^1_{\fin}(\Q_p,V_p(E)\otimes W_p) = \mathrm{Ker}\big( H^1(\Q_p,V_p(E)\otimes W_p) \lra H^1(I_p,V^-_p(E)\otimes W_p)\big)
 $$
 is the kernel of the composition of the homomorphism in cohomology induced by the natural projection $V_p(E) \lra V_p^-(E)$ and restriction to the inertia subgroup $I_p \subset G_{\Q_p}$. 
 
The long exact sequence in Galois cohomology arising from the exact sequence 
$$
0 \ra V^+_p(E) \lra V_p(E) \lra V^-_p(E) \ra 0
$$
 shows that the kernel of the map $H^1(\Q_p,V_p(E)\otimes W_p) \lra H^1(\Q_p,V^-_p(E)\otimes W_p)$ is naturally identified with $H^1(\Q_p,V^+_p(E)\otimes W_p)$. We have $H^1(I_p,\Q_p(\psi \varepsilon_{\cyc})) = 0$ for any nontrivial unramified character $\psi$. Besides, it follows from e.g.\,\cite[Example 1.4]{DR20a} that $H^1_{\fin}(\Q_p,\Q_p(\varepsilon_{\cyc}))  = \ker\big( H^1(\Q_p,\Q_p(\varepsilon_{\cyc})) \ra H^1(I_p,\Q_p(\varepsilon_{\cyc}))\big)$ is a line in the two-dimensional space $H^1(\Q_p,\Q_p(\varepsilon_{\cyc}))$, which Kummer theory identifies with $\Z_p^\times  \hat\otimes_{\Z_p} \Q_p$ sitting inside $\Q_p^\times \hat\otimes_{\Z_p} \Q_p$.

Note that $V^+_p(E)= L_p(\psi_f \varepsilon_{\cyc})$ and $V_p^-(E) \simeq L_p(\psi_f)$ where $\psi_f$ is the unramified quadratic character of $G_{\Q_p}$ sending $\Fr_p$ to $\alpha$. The lemma follows.
\end{proof}


 The Selmer group $\Sel_p(E, W_p)$ is defined as 
\begin{equation*}\label{defSelmer}
\Sel_p(E,W_p)  := \{ \lambda \in  H^1(\Q,V_p(E)\otimes W_p): \mathrm{res}_p(\lambda)\in H^1_{\fin}(\Q_p,V_p(E)\otimes W_p)\}.
\end{equation*}

Here $\mathrm{res}_p$ stands for the natural  map in cohomology induced by restriction from $G_\Q$ to $G_{\Q_p}$.

\section{Factorisation of $p$-adic $L$-series}
\label{3.3}

The goal of this section is proving a factorisation formula of
 $p$-adic $L$-functions which shall be crucial in the proof
  of our main theorems. 

Keep the notations introduced in the previous section and recall in particular the sign $\alpha :=a_p(f)\in \{ \pm 1\}$ associated to $E$. Let $g_\zeta$ and $h_{\alpha \zeta^{-1}}$ denote the ordinary $p$-stabilizations of $g$ and $h$ on which the Hecke operator $U_p$ acts with eigenvalue 
\begin{equation}\label{alphabeta}
\alpha_g :=\zeta \quad \mbox{and} \quad \alpha_h :=\alpha\zeta^{-1},
\end{equation}
respectively. 

Let $\hf$, $\hg$  and $\hh$ be the Hida families of tame levels $M_f$, $M_g$, $M_h$ and tame characters $1$, $\chi \chi_K$, $\chi^{-1} \chi_K$ passing respectively through $f$, $g_\zeta$ and $h_{\alpha \zeta^{-1}}$. The existence of these families is a theorem of Wiles \cite{W}, and their uniqueness follows from a recent result of Bella\"iche and Dimitrov \cite{BeDi} (note that the main theorem of loc.\,cit.\,indeed applies because $\alpha_g\ne \beta_g$, $\alpha_h\ne \beta_h$ and $p$ does not split in $K$). Let $x_0$, $y_0$, $z_0$ denote the classical points in $\cW_{\hf}$, $\cW_{\hg}$ and $\cW_{\hh}$ respectively such that $\hf_{x_0}=f$, $\hg_{y_0}=g_\zeta$ and $\hh_{z_0}=h_{\alpha \zeta^{-1}}$.

As explained in \cite{DR1}, \cite{DR2} and recalled briefly in \cite[(5.1)]{DR20a} in this volume, 
associated to a choice
$$\htf \in S^{\ordi}_{\Lambda_{\hf}}(M)[\hf], \quad \htg\in S^{\ordi}_{\Lambda_{\hg}}(M,\chi \chi_K)[\hg], \quad \hth\in S^{\ordi}_{\Lambda_{\hh}}(M,\chi^{-1}\chi_K)[\hh]$$ 
of $\Lambda$-adic test vectors of tame level $M$ there is a three-variable $p$-adic $L$-function $\Lp^f(\htf, \htg, \hth)$. Among such choices, Hsieh \cite{Hs} pinned down a particular choice of test vectors with optimal interpolation properties (cf.\,loc.\,cit.\,and \cite[Prop.\,5.1]{DR20a} for more details), which we fix throughout this section.

Define 
\begin{equation}\label{Lpf}
\Lp^f(\htf,\testg_\zeta, \testh_{\alpha \zeta^{-1}}) \in \Lambda_{\hf}
\end{equation}
to be the one-variable $p$-adic $L$-function arising as the restriction of $\Lp^f(\htf, \htg, \hth)$ to the rigid analytic curve $\cW_\hf \times \{y_0,z_0\}$.

In addition, recall the $p$-adic $L$-functions described in 
\S \ref{3.2} associated to the twist of $E/K$ by an anticyclotomic character of $K$, and set
 $\mathfrak{f}_{\cO}(\kk) :=  (Dc^2)^{\frac{1-k}{2}}/\mathfrak{f}_c^2$, where $\mathfrak{f}_c$ is the 
 constant introduced at the first display of \cite[\S 3.2]{LMY}. Note that the rule $k \mapsto \mathfrak{f}_{\cO}(\kk) $ extends to an Iwasawa
  function, that we continue to denote $\mathfrak{f}_{\cO}$, because $p$ does not divide $Dc^2$. Recall also
   the rigid-analytic function $\Lp(\mathrm{Sym}^2(\hf))$ in a neighborhood $U_{\hf} \subset \cW_{\hf}$ of $x_0$
    introduced in \eqref{BD-lambda}.

\begin{theorem}
\label{factorisation-DR} 
The following factorization of $p$-adic $L$-functions holds in $\Lambda_{\hf}$:
$$
 \Lp(\mathrm{Sym}^2(\hf)) \times \Lp^f(\htf,\testg_\zeta, \testh_{\alpha \zeta^{-1}}) = \mathfrak{f}_{\cO} \cdot \Lp(\hf/K,\psi) \times \Lp(\hf/K,\xi).
$$

\end{theorem}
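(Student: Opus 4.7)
Plan of proof. Since both sides lie in the Iwasawa algebra $\Lambda_{\hf}$ (or a suitable neighbourhood of $x_0$) and the set of classical points $\cW_{\hf}^\circ$ of weight $k\geq 2$ is Zariski dense, it suffices to verify the identity at every classical point $x \in U_{\hf} \cap \cW_{\hf}^\circ$ of weight $k = \kk + 2 \geq 2$. At such an $x$, the strategy has three ingredients: evaluate each of the four $p$-adic $L$-functions in the identity via its defining interpolation property, factor the resulting triple-product complex $L$-value using Artin formalism, and match archimedean constants, Euler factors at $p$ and periods on the two sides.

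The inputs are as follows. Theorem \ref{propo} gives
$$\Lp(\mathrm{Sym}^2(\hf))(x) = \cE_0(\hf_x)\cE_1(\hf_x)\cdot \Omega^+_{\hf_x,p}\Omega^-_{\hf_x,p};$$
Theorem \ref{bd2}, applied twice, gives formulas for $\Lp(\hf/K,\psi)(x)$ and $\Lp(\hf/K,\xi)(x)$ whose product involves the square roots $L(\hf_x^\circ/K,\psi,k/2)^{1/2}L(\hf_x^\circ/K,\xi,k/2)^{1/2}$ together with the period ratios $\Omega^{\epsilon_\psi}_{\hf_x,p}/\Omega^{\epsilon_\psi}_{\hf_x,\C}$ and $\Omega^{\epsilon_\xi}_{\hf_x,p}/\Omega^{\epsilon_\xi}_{\hf_x,\C}$; and the value of the triple-product $p$-adic $L$-function $\Lp^f(\htf,\testg_\zeta,\testh_{\alpha\zeta^{-1}})(x)$ is given by Hsieh's optimal interpolation formula for $\htf,\htg,\hth$ in the $\hf$-dominant region $k\geq 1+1$. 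The two crucial algebraic identities then are the Artin factorisation
$$L(\hf_x^\circ\otimes g\otimes h, k/2) = L(\hf_x^\circ/K,\psi,k/2)\cdot L(\hf_x^\circ/K,\xi,k/2),$$
which is a direct consequence of \eqref{psi-psi1}, and the period identity
$$\Omega^{\epsilon_\psi}_{\hf_x,\C}\cdot \Omega^{\epsilon_\xi}_{\hf_x,\C} \;=\; \Omega^+_{\hf_x,\C}\cdot \Omega^-_{\hf_x,\C} \;=\; 4\pi^2 \langle \hf_x^\circ, \hf_x^\circ\rangle,$$
which uses $\epsilon_\psi = -\epsilon_\xi$ (since $\psi$ and $\xi$ have opposite parities) together with condition (ii) of the period normalisation. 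Once these are applied to the RHS, the combination $\Omega^+_{\hf_x,p}\Omega^-_{\hf_x,p}/(4\pi^2\langle \hf_x^\circ, \hf_x^\circ\rangle)$ emerges, multiplied by the square root of the triple-product central value; the numerator cancels the period factor supplied by $\Lp(\mathrm{Sym}^2(\hf))(x)$, and what remains on both sides has exactly the shape of the Hsieh formula.

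The main obstacle, and the bulk of the real work, is the bookkeeping of the remaining constants. One has to verify that (i) the $p$-adic Euler factors match: $\cE_0(\hf_x)\cE_1(\hf_x)$ times the Euler factor appearing in Hsieh's interpolation at $(x,y_0,z_0)$ must reproduce the factor $(1-\alpha_{\hf_x}^{-2}p^{\kk})^2$ arising from the product $\mathfrak{f}_{\hf,\psi}(x)\mathfrak{f}_{\hf,\xi}(x)$; and (ii) the archimedean and conductor-dependent scalars $\mathfrak{f}_c$, $(Dc^2)^{(\kk+1)/4}$, $(\kk/2)!$ and $(2\pi i)^{-\kk/2}$ occurring twice on the RHS, taken together, assemble into $\mathfrak{f}_{\cO}(\kk) = (Dc^2)^{(1-k)/2}/\mathfrak{f}_c^2$ multiplied by the complex constants produced by Hsieh's formula at the centre of symmetry $s=k/2$ of the triple product. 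Both verifications are direct but lengthy computations with explicit interpolation formulas. The identity at every classical $x$ then propagates to the asserted identity in $\Lambda_{\hf}$ by Zariski density.
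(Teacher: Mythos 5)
Your proposal is correct and follows essentially the same route as the paper's proof: evaluate all four $p$-adic $L$-functions at classical points $x\in\cW_{\hf}^\circ$ via their interpolation formulas (Theorem \ref{propo}, Theorem \ref{bd2} applied to $\psi$ and $\xi$, and Hsieh's formula for the triple product), use the Artin factorisation of $L(\hf_x^\circ,g,h,k/2)$ coming from \eqref{psi-psi1} together with the period relation $\Omega^+_{\hf_x,\C}\Omega^-_{\hf_x,\C}=4\pi^2\langle\hf_x^\circ,\hf_x^\circ\rangle$ and the opposite parities of $\psi$ and $\xi$, match the Euler factors at $p$ and the elementary constants against $\mathfrak{f}_{\cO}$, and conclude by density of classical points in the rigid-analytic topology. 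The constant and Euler-factor bookkeeping you flag as the remaining work is exactly what the paper's proof carries out at the same level of detail.
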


\begin{proof} 

It follows from \cite[Prop.\,5.1]{DR20a} that  $\Lp^f(\htf,\testg_\zeta, \testh_{\alpha \zeta^{-1}})$ satisfies the following interpolation property for all $x\in \cW_{\hf}^\circ$ of weight $k\geq 2$:
\begin{equation*}
\Lp^f(\htf,\testg_\zeta, \testh_{\alpha \zeta^{-1}})(x)  =   (2 \pi i)^{-k}  \cdot   (\frac{\kk}{2}!)^2 \cdot  \frac{1-\alpha^{-2}_{\hf_x} p^{\kk}}{1-  \beta_{\hf_x}^2  p^{1-k}} \cdot \frac{L(\hf^\circ_x,g,h,\frac{k}{2})^{1/2}}{\langle \hf^\circ_x,\hf^\circ_x\rangle}.
\end{equation*}

Besides, it follows from Theorem \ref{bd2} that the product of $\Lp(\hf/K,\psi)$ and $\Lp(\hf/K,\xi)$ satisfies that for all $x\in \cW_{\hf}^\circ$ of weight $k\geq 2$:
$$
\Lp(\hf/K,\psi) \Lp(\hf/K,\xi)(x) = \mathfrak{f}_{\hf,\psi}(x) \cdot\mathfrak{f}_{\hf,\xi}(x) \times L(\hf^\circ_x/K,\psi,k/2)^{1/2} \cdot L(\hf^\circ_x/K,\xi,k/2)^{1/2}
$$
where  
$$
\mathfrak{f}_{\hf,\psi}(x) \cdot\mathfrak{f}_\xi (x) =  (1-\alpha_{\hf_x}^{-2} p^{\kk})^2 \cdot \frac{\mathfrak{f}^2_c \cdot (Dc^2)^{\frac{\kk+1}{2}} \cdot (\frac{\kk}{2})!^2}{(2 \pi i)^{\kk}} \cdot \frac{\Omega^+_{\hf_x,p}\Omega^-_{\hf_x,p}}{\Omega^{+}_{\hf_x,\C} \Omega^{-}_{\hf_x,\C}}.
$$

A direct inspection to the Euler factors shows that 
for all $x\in \cW_{\hf}^\circ$ of weight $k\geq 2$:
\begin{equation}
L(\hf^\circ_x,g,h,k/2) = L(\hf^\circ_x/K,\psi,k/2)\cdot L(\hf^\circ_x/K,\xi,k/2).
\end{equation}

Recall finally from Theorem \ref{propo} that the value of $\Lp(\mathrm{Sym}^2(\hf))$ at a point $x\in U_{\hf} \cap \cW_{\hf}^\circ$ is $$\Lp(\mathrm{Sym}^2(\hf))(x) = (1-  \beta_{\hf_x}^2  p^{1-k})(1-\alpha_{\hf_x}^{-2} p^{\kk}) \Omega^+_{\hf_x,p}\Omega^-_{\hf_x,p}.$$

Combining the above formulae together with the equality
$$
\Omega^+_{\hf_x,\C} \cdot \Omega^-_{\hf_x,\C} = 4\pi^2 \langle \hf^\circ_x, \hf^\circ_x\rangle,
$$
described in \S \ref{3.2}, it follows that the following formula holds for all $x\in \cW_{\hf}^\circ$ of weight $k\geq 2$:
\begin{equation*}\label{fac}
 \Lp(\mathrm{Sym}^2(\hf))(x) \times \Lp^f(\htf,\testg_\zeta, \testh_{\alpha \zeta^{-1}})(x)  = \lambda_{\cO}(\kk) \cdot \Lp(\hf/K,\psi)(x) \times \Lp(\hf/K,\xi) (x).
\end{equation*}

Since $\cW_{\hf}^\circ$ is dense in $\cW_{\hf}$ for the rigid-analytic topology, the factorization formula claimed in the theorem follows. 
\end{proof}

Recall from Theorem \ref{bd2}  that $\Lp(\hf/K,\psi)$ and $\Lp(\hf/K,\xi)$ both vanish at $x_0$ and
\begin{equation}\label{thm-bd2bis}
\frac{d}{dx} \Lp(\hf/K,\psi)_{|x=x_0} = \frac{1}{2}\cdot \log_p(P_\psi^\alpha), \quad \frac{d}{dx} \Lp(\hf/K,\xi)_{|x=x_0} = \frac{1}{2}\cdot  \log_p(P_\xi^\alpha).
\end{equation}

By Theorem \ref{propo}, $\Lp(\mathrm{Sym}^2(\hf))(x_0)\in \Q^\times$. It thus follows from Theorem \ref{factorisation-DR} that the order of vanishing of $\Lp^f(\htf^\vee, \testg_{\zeta}, \testh_{\alpha \zeta^{-1}})$ at $x=x_0$ is at least two and
\begin{equation}\label{--}
\frac{d^2}{dx^2} \Lp^f(\htf^\vee,\testg_\zeta, \testh_{\alpha \zeta^{-1}})_{|x=x_0} = C_1\cdot \log_p(P_\psi^\alpha)\cdot  \log_p(P_\xi^\alpha),
\end{equation}
where $C_1$ is a non-zero simple algebraic constant.

As recalled at the beginning of this article, $P_{\xi,\fpp}^\alpha$ is non-zero. We can also suppose that $P_{\psi,\fpp}^\alpha$ is non-zero, as otherwise there is nothing to prove. Hence \eqref{--} shows that the order of vanishing of $\Lp^f(\htf^\vee, \testg_{\zeta}, \testh_{\alpha \zeta^{-1}})$ at $x=x_0$ is exactly two.

\section{Main results}\label{sec:AB}

Let us now explain the proofs of the main theorems stated in the introduction by invoking the results proved in previous sections in combination with some of the main statements proved in the remaining contributions to this volume.

Let
$$
\kappa(\hf, \hg, \hh) \in H^1(\Q,\mathbb{V}^\dag_{\hf \hg \hh}(M))
$$
be the $\Lambda$-adic global cohomology class introduced in \cite[Def.\,5.2]{DR20a}. 

Define $\mathbb{V}^\dag_{\hf g h}(M)$ as the $\Lambda_{\hf}[G_{\Q}]$-module obtained by specialising the $\Lambda_{\hf \hg \hh}[G_{\Q}]$-module $\mathbb{V}^\dag_{\hf \hg \hh}(M)$ at $(y_0,z_0)$. Let
\begin{equation}\label{211}
\kappa(\hf, g_\zeta, h_{\alpha \zeta^{-1}}) := \nu_{y_0,z_0}\kappa(\hf, \hg, \hh) \in H^1(\Q,\mathbb{V}^\dag_{\hf g h}(M))
\end{equation}
denote the specialisation of $\kappa(\hf, \hg, \hh)$ at $(y_0,z_0)$, and 
$$
\kappa(f, g_\zeta, h_{\alpha \zeta^{-1}})  \in H^1(\Q,V_{f g h}(M)) 
$$
denote the class obtained by specializing \eqref{211} further at $x_0$.

Let us analyze the above class locally. According to the discussion preceding Lemma \ref{partial}, it follows that $\mathrm{res}_\ell \,(\kappa(f, g_\zeta, h_{\alpha \zeta^{-1}})) = 0$ at every prime $\ell \ne p$. 

In order to study it at $p$, write $\kappa_p(f, g_\zeta,h_{\alpha \zeta^{-1}}) := \mathrm{res}_p\,\kappa (f, g_\zeta,h_{\alpha \zeta^{-1}})\in H^1(\Q_p,V_f\otimes V_{gh}(M))$.

After setting $V_{gh}^{a b} = V_g^a\otimes V_h^b$, we find that there is a natural decomposition
\begin{equation}\label{ab}
H^1(\Q_p,V_p(E)\otimes V_{gh}) = \bigoplus_{(a,b)} H^1(\Q_p,V_p(E)\otimes V_{gh}^{a b})
\end{equation}
where $ (a,b)$ ranges through the four pairs $(\alpha_g,\alpha_h),(\alpha_g,\beta_h),(\beta_g,\alpha_h),(\beta_g,\beta_h)$. Analogous decompositions hold for the various Galois cohomology groups appearing in this section. Given a class $\kappa \in H^1(\Q_p,V_p(E)\otimes V_{gh}(M))$, we shall denote $\kappa^{ab}$ for its projection to the corresponding $(a,b)$-component.

Note that
 \begin{equation}\label{products}
 \alpha_g \alpha_h = \beta_g \beta_h = \alpha,  \qquad \alpha_g \beta_h = \beta_g \alpha_h = -\alpha.
 \end{equation}
 Hence, according to Lemma \ref{partial}, $\kappa(f, g_\zeta, h_{\alpha \zeta^{-1}})$ lies in the Bloch-Kato finite submodule of $H^1(\Q,V_{f g h}(M))$ if and only if
 \begin{itemize}
\item[(i)]   $\kappa_p(f,g_\zeta,h_{\alpha \zeta^{-1}})^{\alpha_g \beta_h}$ and  $\kappa_p(f,g_\zeta,h_{\alpha \zeta^{-1}})^{\beta_g \alpha_h}$ lie in $H^1(\Q_p,V^+_p(E)\otimes V_{gh}(M))$, 

\item[(ii)] $\kappa_p(f,g_\zeta,h_{\alpha \zeta^{-1}})^{\alpha_g \alpha_h}$ and  $\kappa_p(f,g_\zeta,h_{\alpha \zeta^{-1}})^{\beta_g \beta_h}$ lie in $H^1_{\fin}(\Q_p,V^+_p(E)\otimes V_{gh}(M))$.
 
 \end{itemize}

By \cite[Proposition 1.5.8]{DR20a}, the local class $\kappa_p(f, g_\zeta, h_{\alpha \zeta^{-1}}) $ is the specialization at $(x_0,y_0,z_0)$ of a $\Lambda$-adic cohomology class with values in the $\Lambda$-adic representation $\mathbb{V}_{\hf \hg \hh}^+(M)$, which recall is defined as the span in $\mathbb{V}_{\hf \hg \hh}^\dag(M)$ of (suitably twisted) triple tensor products of the form $\mathbb{V}^{\pm}_{\hf} \otimes\mathbb{V}^\pm_{\hg}\otimes \mathbb{V}^\pm_{\hh}$, with at least two $+$'s in the exponents.

Since $V_g^{\beta_g} = V_g^+$ and $V_g^{\alpha_g} = V_g^-$, and similarly for $V_h$, it follows from the very definition of $\mathbb{V}_{\hf \hg \hh}^+(M)$ that the $(\alpha_g,\alpha_h)$-component of $\kappa_p(f,g_\zeta,h_{\alpha \zeta^{-1}})$ in $H^1(\Q_p,V_f\otimes V^{\alpha_g \alpha_h}_{gh}(M))$ vanishes --this yields a fortiori claim (ii) for the $(\alpha_g,\alpha_h)$-component. The same reasoning also yields that the $(\alpha_g,\beta_h)$ and $(\beta_g,\alpha_h)$-components of the projection of $\kappa_p(f,g_\zeta, h_{\alpha \zeta^{-1}})$ to $H^1(\Q_p,V^-_f\otimes V_{gh}(M))$ vanish, and hence (i) holds.

It only remains to analyze the $(\beta_g,\beta_h)$-component $\kappa_p(f, g_\zeta,h_{\alpha \zeta^{-1}})$. For this purpose we define the $\Lambda_{\hf}[G_{\Q_p}]$-modules
$$
\mathbb{W} :=  \mathbb{V}_{\hf, \beta \beta}(M) := \mathbb{V}_{\hf}(M)( \underline{\varepsilon}_{\hf}^{-1/2})\otimes V_{gh}^{\beta_g \beta_h}(M), \quad 
$$
$$
\mathbb{W}^- :=   \mathbb{V}^-_{\hf, \beta \beta}(M) := \mathbb{V}^-_{\hf}(M)( \underline{\varepsilon}_{\hf}^{-1/2})\otimes V_{gh}^{\beta_g \beta_h}(M).
$$

It follows from \eqref{alphabeta} that $V_{gh}^{\beta \beta} = L_p(\alpha)$ is the one-dimensional representation afforded by the character of $\Gal(K_p/\Q_p)$ sending $\Fr_p$ to $\alpha=a_p(E)$. Hence $\hW^-$ is the sub-quotient of $\mathbb{V}^\dag_{\hf g h}(M)$ that is isomorphic to several copies of $\Lambda_{\hf}(\Psi_{\hf}^{g h}  \underline{\varepsilon}_{\hf}^{-1/2})$, where  as in \cite[(1.5.5)]{DR20a},
$\Psi_{\hf}^{g h}$ denotes the unramified character of $G_{\Q_p}$  satisfying 
$$
\Psi_{\hf}^{g h}(\Fr_p) = {\bf a}_p(\hf) a^{-1}_p(\hg_1) a^{-1}_p(\hh_1) = \alpha \cdot {\bf a}_p(\hf).
$$ 

Let
\begin{equation}\label{kap}
\hkappa_p^f(\hf,g_\zeta,h_{\alpha \zeta^{-1}})  \in  H^1(\Q_p, \hW), \quad \hkappa_p^f(\hf,g_\zeta,h_{\alpha \zeta^{-1}})^-  \in  H^1(\Q_p, \hW^-)
\end{equation}
denote the image of  $\kappa_p (\hf, g_\zeta, h_{\alpha \zeta^{-1}})$ under the map induced by the projection $\mathbb{V}^+_{\hf g h}(M) \ra \hW = \mathbb{V}_{\hf, \beta \beta}(M) $, and further to $\hW^- =  \mathbb{V}^-_{\hf, \beta \beta}(M) $ respectively. 

Equivalently and in consonance with our notations, $\hkappa_p^f(\hf,g_\zeta,h_{\alpha \zeta^{-1}})^-$ is the specialization at $(y_0,z_0)$ of the local class $\hkappa_p^f(\hf,\hg,\hh)^-$ introduced in \cite[(1.5.8)]{DR20a}
and invoked in \cite[Theorem 1.5.1]{DR20a}. Hence \cite[Theorem 1.5.1]{DR20a} applies and asserts that the following identity holds in $\Lambda_{\hf}$ for any triple $(\htf,\htg,\hth)$ of $\Lambda$-adic test vectors:

\begin{equation}\label{PR2}
\langle \cL_{\hf,\hg \hh}(\hkappa_p^f(\hf,g_\zeta,h_{\alpha \zeta^{-1}})^-), \eta_{\htf^*} \otimes \omega_{\testg^*_\zeta} \otimes \omega_{\testh^*_{\alpha\zeta^{-1}}}\rangle  =  \Lp^{f}(\htf^\vee,\testg_\zeta, \testh_{\alpha \zeta^{-1}}).
\end{equation}

Let now $\kappa_p^f(f,g_\zeta,h_{\alpha \zeta^{-1}})$  and $\kappa_p^f(f,g_\zeta,h_{\alpha \zeta^{-1}})^-$ denote the specializations at $x_0$ of the classes in \eqref{kap}. According to our previous definitions, we have 
\begin{equation}\label{kappa-bb}
\kappa_p(f,g_\zeta,h_{\alpha \zeta^{-1}})^{\beta_g \beta_h} = \kappa_p^f(f,g_\zeta,h_{\alpha \zeta^{-1}}).
\end{equation}

Since $a_p(f)=\alpha\in \{ \pm 1	\}$ and $\underline{\varepsilon}_{\hf}(x_0)=1$, it follows from the above description of $\hW$ and the character $\Psi_{\hf}^{g h}$ that
$\hW(x_0)\simeq V_p(E_+)(M)$  as $G_{\Q_p}$-modules,
where $E_+$ is the (trivial or quadratic) twist of $E$ given by $\alpha$. 
Hence 
$
\kappa_p^f(f,g_\zeta,h_{\alpha \zeta^{-1}})\in H^1(\Q_p,V_p(E_+)(M)).
$

The Bloch-Kato dual exponential and logarithm maps associated to the $p$-adic representation $V_p(E_+)(M)$ take values in a space $L_p(M)$ consisting of several copies of the base field $L_p$. Given a choice of test vectors, it gives rise to a projection $L_p(M) \lra L_p$. We shall denote  by a slight abuse of notation
$$
\log_{\mathrm{BK}}: H^1_{\fin}(\Q_p,V_p(E_+)(M)) \lra L_p
$$
the composition of  the Bloch-Kato logarithm with the projection to $L_p$.



The following fundamental input comes  from the main results due to Bertolini, Seveso and Venerucci in this volume, and we refer to \cite{BSV1} and \cite{BSV2} for the detailed proof; here we just content to point out to precise references in loc.\,cit. As explained in the introduction, in a previous version of this paper formula \eqref{C2} below was wrongly attributed to \cite{Ve}.

\begin{theorem}\label{corolo}(Bertolini, Seveso, Venerucci) The local class $ \kappa_p^f(f,g_\zeta,h_{\alpha \zeta^{-1}})$ is crystalline and
\begin{equation}\label{C2}
\frac{d^2}{dx^2}  \Lp^{f}(\htf^\vee,\testg_\zeta, \testh_{\alpha \zeta^{-1}})_{|x=x_0}  \, = \, C_2 \cdot \log_{\mathrm{BK}}(\kappa_p^f(f,g_\zeta,h_{\alpha \zeta^{-1}}))  
\end{equation}
for some nonzero rational number $C_2\in \Q^\times$.
\end{theorem}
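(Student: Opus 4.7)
The plan is to deduce both assertions of the theorem from the Perrin-Riou style reciprocity law \eqref{PR2} via a derivative analysis at the exceptional zero $x_0$, which is the subject of \cite{BSV1,BSV2}.

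First, I would isolate the two distinct sources of vanishing of $\Lp^f(\htf^\vee,\testg_\zeta,\testh_{\alpha\zeta^{-1}})$ at $x_0$. The interpolation formula recalled right after \eqref{Lpf} contains an Euler-like factor $1 - \alpha_{\hf_x}^{-2} p^{\kk}$ which vanishes at $x=x_0$ because $\kk=0$ and $\alpha_{\hf_{x_0}} = a_p(f) = \alpha \in \{ \pm 1\}$; this is the classical exceptional zero of the Hida family, consistent with the condition $\Psi_{\hf}^{g h}(\Fr_p)(x_0) = \alpha^2 = 1$ that makes $\mathbb{W}^-(x_0)$ unramified with trivial Frobenius action. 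Simultaneously, the central critical value $L(f,g,h,1) = L(f/K,\psi,1)\cdot L(f/K,\xi,1)$ vanishes to order two, and combined with Theorems \ref{factorisation-DR} and \ref{propo} this confirms that $\Lp^f$ vanishes to order exactly two at $x_0$.

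Next, I would establish crystallinity of $\kappa_p^f(f,g_\zeta,h_{\alpha\zeta^{-1}})$. The big Perrin-Riou logarithm $\cL_{\hf,\hg\hh}$ encodes the local behaviour of the $\Lambda$-adic class $\hkappa_p^f(\hf,g_\zeta,h_{\alpha\zeta^{-1}})^-$ at $p$; at the specialization $x_0$ it interpolates the Bloch-Kato dual exponential on the non-crystalline direction and the Bloch-Kato logarithm on the crystalline direction. Because $\mathbb{W}^-(x_0)$ is trivial as a Galois module up to cyclotomic twist, $\cL$ itself acquires a first-order zero at $x_0$ in the style of Greenberg-Stevens exceptional-zero theory. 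If $\kappa_p^f(f,g_\zeta,h_{\alpha\zeta^{-1}})$ were not crystalline, the reciprocity \eqref{PR2} would then force $\Lp^f$ to vanish to order exactly one at $x_0$, contradicting the order-two vanishing just established; hence the class must lie in the Bloch-Kato finite submodule, as desired for the first assertion of the theorem.

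Having established crystallinity, formula \eqref{C2} is obtained by computing the second Taylor coefficient of both sides of \eqref{PR2} at $x_0$. The leading Taylor coefficient of $\cL_{\hf,\hg\hh}$ along $\cW_{\hf}$ at $x_0$, evaluated on a crystalline class, equals a Greenberg-Stevens type $\mathcal{L}$-invariant --- extracted by differentiating the Euler factor $1-\alpha_{\hf_x}^{-2}p^{\kk}$ using the weight derivative of $\alpha_{\hf_x}$ --- times the Bloch-Kato logarithm of the specialization, producing
\[\frac{d^2}{dx^2}\Lp^f(\htf^\vee,\testg_\zeta,\testh_{\alpha\zeta^{-1}})_{|x=x_0} = C_2 \cdot \log_{\mathrm{BK}}(\kappa_p^f(f,g_\zeta,h_{\alpha\zeta^{-1}}))\]
with $C_2$ a nonzero rational absorbing the $\mathcal{L}$-invariant together with normalization constants from the test vectors $\eta_{\htf^*} \otimes \omega_{\testg^*_\zeta} \otimes \omega_{\testh^*_{\alpha\zeta^{-1}}}$. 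The main obstacle is justifying this derivative formula for the $\Lambda$-adic Perrin-Riou logarithm at the exceptional zero: it requires an improved two-variable regulator construction together with a derivative identity for the Coleman power series attached to the ordinary Hida family $\hf$, which is the technical heart of \cite{BSV1,BSV2} and corresponds to the gap in the original version of this note that the authors acknowledge in Remark 4 of the introduction.
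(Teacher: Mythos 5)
Your proposal does not actually close the key step: it reorganizes the statement around \eqref{PR2} and then defers precisely the input that has to be proved. For comparison, the paper's own ``proof'' is a pointer to the companion articles: crystallinity is quoted from \cite[Theorem B]{BSV1} --- since $L(f,g,h,1)=0$, the equivalence (a)$\Leftrightarrow$(c) of \cite[\S 9.4]{BSV1} gives the vanishing of the dual exponential on $\kappa_p^f(f,g_\zeta,h_{\alpha\zeta^{-1}})$, the relation \eqref{products} guaranteeing that the improved class of loc.\,cit.\ is a nonzero multiple of $\kappa(f,g_\zeta,h_{\alpha\zeta^{-1}})$ --- and \eqref{C2} is quoted as \cite[Proposition 2.2]{BSV2} combined with \eqref{PR2}. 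Your route to crystallinity (order $\geq 2$ vanishing of $\Lp^{f}(\htf^\vee,\testg_\zeta,\testh_{\alpha\zeta^{-1}})$ at $x_0$, obtained from Theorems \ref{propo}, \ref{factorisation-DR} and \ref{GZ-SH}, played against an asserted ``order exactly one if non-crystalline'' dichotomy) differs in flavour from the paper's, which uses the vanishing of the complex value $L(f,g,h,1)$; but the dichotomy you invoke, and likewise your Taylor expansion for the second derivative, are themselves derived reciprocity laws. Specialized at $x_0$, \eqref{PR2} only reads $0=0$ because of the exceptional zero; the claims that the first-order term of $\cL_{\hf,\hg\hh}$ computes the dual exponential and that its second-order analysis on a crystalline class computes $\log_{\mathrm{BK}}$ with a controlled constant are exactly the content of \cite{BSV1}, \cite{BSV2} (and, as Remark 4 of the introduction records, this is delicate enough that an earlier version of the paper misattributed it). So the proposal is circular at its crucial point: you acknowledge the gap, but with that step deferred neither assertion of the theorem is proved.

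There is also a concrete inaccuracy in your sketch of \eqref{C2}: you describe $C_2$ as absorbing a Greenberg--Stevens $\mathcal{L}$-invariant extracted by differentiating the factor $1-\alpha_{\hf_x}^{-2}p^{\kk}$ in the weight direction. That cannot be the shape of the formula, since the theorem asserts $C_2\in\Q^\times$ while the $\mathcal{L}$-invariant $\log_p(q_E)/\mathrm{ord}_p(q_E)$ is not a rational number in general (conjecturally it is transcendental); consistently with this, the second derivative of $\Lp^f$ at $x_0$ computed via the factorisation in \eqref{--} is a product of two Stark--Heegner logarithms with no $\mathcal{L}$-invariant present, and the analysis in \cite{BSV2} is organized through improved classes and improved $p$-adic $L$-functions precisely so that no such factor survives in \eqref{C2}. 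Finally, your claim of vanishing to order \emph{exactly} two at $x_0$ requires $\log_p(P_{\psi,\fpp}^\alpha)\neq 0$, which is not known unconditionally (the paper only assumes it without loss of generality in the proof of Theorem A); fortunately only order $\geq 2$, which does follow from the factorisation, is what your contradiction argument needs.
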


Indeed, the first claim of the above theorem follows from \cite[Theorem B]{BSV1}: since $L(f,g,h,1)=0$ it follows from the equivalence between (a) and (c) of \cite[\S 9.4]{BSV1} that the dual exponential map vanishes on $\kappa_p^f(f,g_\zeta,h_{\alpha \zeta^{-1}})$ --note that the improved class $\kappa^*_g(f,g_\zeta,h_{\alpha \zeta^{-1}})$ of loc.\,cit.\,is simply a non-zero multiple of $\kappa(f,g_\zeta,h_{\alpha \zeta^{-1}})$  in our setting, because of \eqref{products}. 
  This amounts to saying that the class is crystalline.  Formula \eqref{C2} follows from  \cite[Proposition 2.2]{BSV2} combined with \eqref{PR2}.

In light of \eqref{kappa-bb} and the above discussion, the above theorem implies that $\kappa(f, g_\zeta, h_{\alpha \zeta^{-1}})$ belongs to the Selmer group $H^1_{\fin}(\Q,V_{f g h}(M))$, as conditions (i) and (ii) above are fulfilled. 

Recall from \eqref{psi-psi1} that $V_{gh}=V_\psi \oplus V_\xi$ decomposes as the direct sum of the induced representations of $\psi$ and $\xi$.  Write
\begin{eqnarray}\label{kpsixi}
\kappa_\psi(f,g_\zeta,h_{\alpha \zeta^{-1}}) \in H^1_{\fin}(\Q,V_p(E)\otimes V_\psi(M)), \\ \nonumber \kappa_\xi(f,g_\zeta,h_{\alpha \zeta^{-1}}) \in H^1_{\fin}(\Q,V_p(E)\otimes V_\xi(M))
\end{eqnarray}
for the projections of the class $\kappa(f, g_\zeta, h_{\alpha \zeta^{-1}})$ to the corresponding quotients. We denote as in the introduction
 $$ 
 \kappa_\psi^\alpha(f,g_\zeta,h_{\alpha \zeta^{-1}}) = 
 (1+ \alpha \sigma_\fpp) \kappa_\psi(f,g_\zeta,h_{\alpha \zeta^{-1}}) \in 
 H^1_{\fin}(H,V_p(E)(M))^{\psi \oplus \bar\psi}
$$
the component of $\kappa_\psi(f,g_\zeta,h_{\alpha \zeta^{-1}})$ on which $\sigma_{\fpp}$ acts with eigenvalue $\alpha$,
and likewise with $\psi$ replaced by the auxiliary character $\xi$. 
 
 \begin{lemma}\label{aa0} We have
 $$
 \log_{E,\fpp}  \kappa_\psi^\alpha(f,g_\zeta,h_{\alpha \zeta^{-1}}) =  \log_{E,\fpp}  \kappa_\xi^\alpha(f,g_\zeta,h_{\alpha \zeta^{-1}}).
 $$
 \end{lemma}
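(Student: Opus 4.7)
The plan is to show that both sides compute, through compatible identifications, the Bloch-Kato logarithm of the common local class $\kappa_p^f(f,g_\zeta,h_{\alpha\zeta^{-1}})$ introduced above. Recall that this class arises as the image of $\kappa_p := \mathrm{res}_p\,\kappa(f,g_\zeta,h_{\alpha\zeta^{-1}})$ under the $G_{\Q_p}$-equivariant projection
$$
\pi\colon V_p(E)\otimes V_{gh}(M) \twoheadrightarrow V_p(E)\otimes V_{gh}^{\beta_g\beta_h}(M) \;\cong\; V_p(E_+)(M),
$$
so both $\pi_*\kappa_\psi^\alpha|_{\Q_p}$ and $\pi_*\kappa_\xi^\alpha|_{\Q_p}$ naturally land in $H^1(\Q_p, V_p(E_+)(M))$, where the Bloch-Kato logarithm $\log_{\mathrm{BK}}$ agrees (up to the standard normalization used to define $\log_{E,\fpp}$) with the formal group logarithm of $E/K_p$.

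I would unwind $\pi$ using the Mackey decomposition $V_{gh} = V_\psi \oplus V_\xi$ provided by \eqref{psi-psi1}, under which $\kappa = \kappa_\psi + \kappa_\xi$. Restricted to $G_{\Q_p}$, each of $V_\psi$ and $V_\xi$ is isomorphic to the regular representation of $\Gal(K_p/\Q_p)$ and decomposes under $\sigma_\fpp = \Fr_p$ into its $\pm 1$ eigenlines. Hence the $(+\alpha)$-eigenspace of $\sigma_\fpp$ in $V_{gh}$ is two-dimensional, spanned by the two 1-dimensional subspaces $V_\psi^{+\alpha}$ and $V_\xi^{+\alpha}$, and also equals $V_{gh}^{\alpha_g\alpha_h} \oplus V_{gh}^{\beta_g\beta_h}$. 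A direct Mackey-style computation in bases of $V_g, V_h$ compatible with the eigenspace decomposition of $\Fr_p$ on each factor then shows that the restrictions $\pi|_{V_\psi^{+\alpha}}$ and $\pi|_{V_\xi^{+\alpha}}$ are both isomorphisms onto $V_{gh}^{\beta_g\beta_h}$, and that, once one identifies $V_\psi^{+\alpha}$ and $V_\xi^{+\alpha}$ with $H^1(K_p, V_p(E))$ via the respective Shapiro isomorphisms, these two projections agree. The constraint \eqref{psi1} relating $\psi_0, \psi_0'$ with $\psi,\xi$ is precisely what is needed to line up the normalizations of the two Mackey embeddings of $V_\psi$ and $V_\xi$ inside $V_g\otimes V_h$.

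It then follows that $\pi_*\kappa_\psi^\alpha|_{\Q_p}$ and $\pi_*\kappa_\xi^\alpha|_{\Q_p}$ represent the same class in $H^1(\Q_p, V_p(E_+)(M))$. Applying the Bloch-Kato logarithm to this common target, and using its compatibility with the formal group logarithm of $E/K_p$ built into the definition of $\log_{E,\fpp}$, yields the equality
$$
\log_{E,\fpp}\kappa_\psi^\alpha(f,g_\zeta,h_{\alpha\zeta^{-1}}) \;=\; \log_{E,\fpp}\kappa_\xi^\alpha(f,g_\zeta,h_{\alpha\zeta^{-1}}),
$$
as claimed.

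The main obstacle is the explicit matching of the two restrictions of $\pi$, which requires a careful choice of bases for the Mackey decomposition $V_g\otimes V_h \cong V_\psi \oplus V_\xi$ and for the identification $V_{gh}^{\beta_g\beta_h}\cong L_p(\alpha)$; one must verify that the relative scalar between the two projections $V_\psi^{+\alpha}\to V_{gh}^{\beta_g\beta_h}$ and $V_\xi^{+\alpha}\to V_{gh}^{\beta_g\beta_h}$ is precisely absorbed by the normalizations of the Shapiro isomorphisms defining $\log_{E,\fpp}$ on each side, so that the resulting logarithms coincide on the nose rather than being merely proportional.
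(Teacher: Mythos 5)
There is a genuine gap here, and it sits at the heart of the argument rather than in the normalization issues you flag at the end. The lemma asserts an equality between two a priori independent cohomology classes: the $\psi$- and $\xi$-components $\kappa^\alpha_{\psi,p}$ and $\kappa^\alpha_{\xi,p}$ of the same local class $\kappa_p$, viewed inside the two-dimensional $\Fr_p=\alpha$ eigenspace $V_\psi^{\Fr_p=\alpha}\oplus V_\xi^{\Fr_p=\alpha}=V_{gh}^{\alpha_g\alpha_h}\oplus V_{gh}^{\beta_g\beta_h}$. Your key step — that since $\pi|_{V_\psi^{\Fr_p=\alpha}}$ and $\pi|_{V_\xi^{\Fr_p=\alpha}}$ agree under the Shapiro identifications, "it then follows that $\pi_*\kappa_\psi^\alpha$ and $\pi_*\kappa_\xi^\alpha$ represent the same class" — is a non sequitur: pushing two \emph{different} classes through compatible isomorphisms of coefficient modules produces equal images only if the classes were already equal, which is precisely what is to be proved. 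Indeed, the change of basis in the $\alpha$-eigenspace gives (up to harmless scalars) $\kappa_p^{\alpha_g\alpha_h}=\kappa^\alpha_{\psi,p}-\kappa^\alpha_{\xi,p}$ and $\kappa_p^{\beta_g\beta_h}=\kappa^\alpha_{\psi,p}+\kappa^\alpha_{\xi,p}$, as in \eqref{kaakbb}; so your opening premise is also off: neither side individually computes $\log_{\mathrm{BK}}$ of the common class $\kappa_p^{\beta_g\beta_h}$ a priori — their \emph{sum} does. For a general class in the $\alpha$-eigenspace the two components, and hence their logarithms, are unrelated.

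The missing ingredient, which is the crux of the paper's proof, is the vanishing $\kappa_p^{\alpha_g\alpha_h}=0$, established just before the lemma from the fact that $\kappa_p$ is the specialization of a $\Lambda$-adic class with values in $\mathbb{V}^+_{\hf\hg\hh}(M)$, together with $V_g^{\alpha_g}=V_g^-$ and $V_h^{\alpha_h}=V_h^-$. Feeding this into the first identity above yields $\kappa^\alpha_{\psi,p}=\kappa^\alpha_{\xi,p}$ on the nose, and the equality of $\log_{E,\fpp}$-values follows immediately (and then $\kappa_p^{\beta_g\beta_h}=2\kappa^\alpha_{\psi,p}$ is what makes your intended identification with the common class correct, up to a factor absorbed mod $L^\times$). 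To repair your write-up you should import that vanishing explicitly; once you do, the Mackey/Shapiro bookkeeping you describe is fine but carries none of the load — the exact scalar matching you worry about is only needed for the subsequent identity relating $\log_{E,\fpp}\kappa_\psi^\alpha$ to $\log_{\beta_g\beta_h}(\kappa_p)$, not for the lemma itself.
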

 
 \begin{proof} We may decompose the local class  $$\kappa_p:=\kappa_p(f,g_\zeta,h_{\alpha \zeta^{-1}}) = (\kappa_p^{\alpha_g\alpha_h},\kappa_p^{\alpha_g\beta_h},\kappa_p^{\beta_g \alpha_h},\kappa_p^{\beta_g \beta_h})$$ 
 in $H^1(\Q_p,V_f\otimes V^{\alpha_g \alpha_h}_{gh}(M))$ as the sum of four contributions with respect to the decomposition \eqref{ab} afforded by the eigen-spaces for the action of $\sigma_\fpp$.
In addition to that, $\kappa_p$ also decomposes as 
$$
 \kappa_p = (\kappa_{\psi,p},\kappa_{\xi,p}) \in H^1_{\fin}(\Q_p,V_p(E)\otimes V_\psi(M)) \oplus H^1_{\fin}(\Q_p,V_p(E)\otimes V_\xi(M)),
$$ 
where $\kappa_{\psi,p}$, $\kappa_{\xi,p}$ are the local components at $p$ of the classes in \eqref{kpsixi}. An easy exercise in linear algebra shows that
\begin{equation}\label{kaakbb}
\kappa_p^{\alpha_g\alpha_h} = \kappa^\alpha_{\psi,p} - \kappa^\alpha_{\xi,p}, \quad \kappa_p^{\beta_g\beta_h} = \kappa^\alpha_{\psi,p} + \kappa^\alpha_{\xi,p}.
\end{equation}
Since we already proved that $\kappa_p^{\alpha_g\alpha_h}=0$,  the above display implies that $\kappa^\alpha_{\psi,p} = \kappa^\alpha_{\xi,p}$ are the same element. The lemma follows. 
\end{proof}

Let
$$
\log_{\beta_g \beta_h}: H_{\fin}^1(\Q_p,V_f\otimes V_{gh}(M)) \stackrel{\mathrm{pr}_{\beta_g \beta_h}}{\lra} H_{\fin}^1(\Q_p,V_f\otimes V^{\beta_g \beta_h}_{gh}(M)) 
 \stackrel{\log_{\mathrm{BK}}}{\lra} L_p
$$
denote the composition of the natural projection to the $(\beta_g,\beta_h)$-component with the Bloch-Kato logarithm map associated to the $p$-adic representation $V_f\otimes V^{\beta_g \beta_h}_{gh}(M) \simeq V_{f_+}(M)$ and the choice of test vectors. Note that $H^1_{\fin}(\Q_p,V_p(E_+))= H^1_{\fin}(\Q_p,\Q_p(1))$, which as recalled in \cite[Example 1.1.4 (c)]{DR20a} is naturally identified with  the completion of $\Z_p^\times$, and the Bloch-Kato logarithm is nothing but the usual $p$-adic logarithm on $\Z_p^\times$ under this identification. 
Lemma \ref{aa0} together with the second identity in \eqref{kaakbb} imply that
\begin{equation*}
(i) \qquad \log_{E,\fpp}  \kappa_\psi^\alpha(f,g_\zeta,h_{\alpha \zeta^{-1}}) = \log_{\beta_g \beta_h}(\kappa_p(f,g_\zeta,h_{\alpha \zeta^{-1}})) .
\end{equation*}

Thanks to \eqref{C2} we have
$$
(ii) \qquad \log_{\beta_g \beta_h}(\kappa_p(f,g_\zeta,h_{\alpha \zeta^{-1}}))  
 = \frac{d^2}{dx^2}  \Lp^{f}(\htf^\vee,\testg_\zeta, \testh_{\alpha \zeta^{-1}})_{|x=x_0}  \pmod{L^\times}.$$

Finally, fix $(\htf,\htg,\hth)$ to be Hsieh's choice of $\Lambda$-adic test vectors satisfying the properties stated in Theorem \ref{factorisation-DR}.
Recall from \eqref{--} that, with this choice, we have
%
$$
(iii) \qquad \frac{d^2}{dx^2} \Lp^f(\htf^\vee,\testg_\zeta, \testh_{\alpha \zeta^{-1}})_{|x=x_0} =  \log_p(P_\psi^\alpha)\cdot  \log_p(P_\xi^\alpha)  \pmod{L^\times}.
$$

Define
$$ \kappa_\psi := \log_{E,\fpp}(P_\xi^\alpha)^{-1} \times \kappa_\psi^\alpha(f,g_\alpha,h_\alpha).$$
It follows from the combination of  (i)-(ii)-(iii) that $\kappa_\psi$ fulfills the claims stated in Theorem A, and hence the theorem is proved.

Theorem B also follows, because the non-vanishing of the first derivative $\frac{d}{dx}\Lp(\hf/K,\psi)_{|x=x_0}$ implies that $P_{\psi,\fpp}^\alpha \ne 0$. Theorem A then implies that the class $\kappa_\psi\in H^1_{\fin}(H,V_p(E)(M))^{\psi \oplus \bar\psi}$ is non-trivial.

\let\oldaddcontentsline\addcontentsline
\renewcommand{\addcontentsline}[3]{}
 
\let\addcontentsline\oldaddcontentsline

\end{document}